\documentclass[12pt]{article}

\usepackage[dvipdfm, bookmarksopen=true]{hyperref}
\usepackage[centertags]{amsmath}
\usepackage{amsfonts}
\usepackage{amssymb}
\usepackage{amsthm}


%

\theoremstyle{plain}
\newtheorem{thm}{Theorem}[section]
\newtheorem{cor}[thm]{Corollary}
\newtheorem{lem}[thm]{Lemma}
\newtheorem{prop}[thm]{Proposition}

\theoremstyle{definition}
\newtheorem{defn}[thm]{Definition}
\newtheorem{exam}[thm]{Example}

\newtheorem{assum}[thm]{Assumption}
\theoremstyle{remark}
\numberwithin{equation}{section}










\newcommand{\beast}{\begin{eqnarray*}}
\newcommand{\eeast}{\end{eqnarray*}}

\title{The flipping puzzle on a graph\thanks{Research partially supported by the NSC grant 96-2628-M-009-015 of
Taiwan R.O.C..}}

\author{Hau-wen Huang\footnote{$~^\ddag$Department of Applied Mathematics National Chiao Tung University 1001 Ta Hsueh
Road Hsinchu, Taiwan 300, R.O.C..} \and Chih-wen Weng$~^\ddag$}

\date{May 19, 2009}

\begin{document}
\maketitle

\begin{abstract}
Let $S$ be a connected graph which contains an induced path of $n-1$ vertices, where $n$ is the
order of $S.$ We consider a puzzle on $S$. A configuration of the puzzle is simply an
$n$-dimensional column vector over $\{0, 1\}$ with coordinates of the vector indexed by the vertex
set $S$.  For each configuration $u$ with a coordinate $u_s=1$, there exists a move that sends $u$
to the new configuration which flips the entries of the coordinates adjacent to $s$ in $u.$ We
completely determine if one configuration can move to another in a sequence of finite steps.
\end{abstract}

\section{Introduction}\label{s1}

Let $S$ be a simple connected graph with vertex set $S=\{s_1, s_2, \ldots, s_n\}.$ By a {\it
flipping puzzle} on $S$, we mean a set of {\it configurations} of $S$  and a set of {\it moves} on
the configurations defined below. The configuration of the flipping puzzle is $S$, together with an
assignment of white or black  state  to each vertex of $S$. A move applied to a configuration $u$
in the puzzle is to select a vertex $s_i$ which has black state, and then flip the states of all
neighbors of $s_i$ in $u$. For convenience we use the set $F_2^n$ of column vectors over $F_2:=\{0,
1\}$, coordinates indexed by $S$, to denote the set of configurations of $S$. Precisely, for a
configuration $u\in F_2^n$, $u_{s_i}=1$ iff $u$ has black state in the vertex $s_i$.  Then for a
configuration $u$ with $u_{s_i}=1$ for some $s_i\in S$, we can apply a move to $u$ by changing $u$
into $u+A{\widetilde s}_i,$ where $A{\widetilde s}_i$ is the column indexed by $s_i$ in the
adjacency matrix $A$ of $S.$ A flipping puzzle is also called a {\it lit-only $\sigma$-game} in
\cite{xw:07}. The study of  flipping puzzles is related to the representation theory of Coxeter
groups \cite{hw:pre} and Lie algebras \cite{pb:00, pb:02, mkc:04, mkc:06, k:96}.
\bigskip

Two configurations  in the flipping puzzle on $S$ are said to be {\it equivalent} if one can be
obtained from the other by a sequence of selected moves. Let $\mathcal{P}$ denote the partition of
$F_2^n$ according to the above equivalent relation. A general question in solving the flipping
puzzle on $S$ is to realize that for a given pair  of configurations $u, v\in F_2^n$, whether $v$
can be obtained from $u$ by a sequence of selected moves or not. This can be done if $\mathcal{P}$
is completely determined.
\bigskip

In this paper we are mainly concerned about the class of graphs, each of which contains an induced
path on $\{s_1, s_2, \ldots, s_{n-1}\}$.  This class of graphs includes the simply-laced Dynkin
diagrams and simply-laced extended Dynkin diagrams  with exceptions $\widetilde{D}_n$ and
$\widetilde{E}_6.$ In each case of such graphs we determine $\mathcal{P}$. \bigskip

 For $u\in F_2^n$ let
$$w(u):=|\{ s_i\in S~|~u_{s_i}=1\}|$$ denote the Hamming weight of $u,$ and for an orbit
$O\in\mathcal{P},$ $$w(O):={\rm min}\{w(u)~|~u\in O\}$$ is called the {\it weight} of the orbit $O.$
The number $$M(S):={\rm max}\{w(O)~|~O\in \mathcal{P}\}$$ is called the {\it maximum-orbit-weight} of
the graph $S.$ A consequence of our result on $\mathcal{P}$ we find $M(S)\leq 2$ and we give a
necessary and sufficient condition for $M(S)=1.$ We also determine the cardinality of
$\mathcal{P}.$ A summary of our results is given in a table of Section~\ref{s7}. Besides these
results, a byproduct is Theorem~\ref{t3.9}.
\bigskip

If $S$ is a tree with $\ell$ leaves, X. Wang, Y. Wu \cite{xw:07} and H. Wu, G. J. Chang
\cite{wu:06} independently prove $M(S)\leq \lceil \ell/2\rceil.$ For each case of Dynkin diagrams
and extended Dynkin diagrams, $\mathcal{P}$ is completely determined by M. Chuah and C. Hu
\cite{mkc:04, mkc:06}. The study of flipping puzzles is related to a rich research subject called
"groups generated by transvections." We will provide this connection in Section~\ref{s8}.

\section{Matrices representing the puzzle}\label{s2}

Let $S$ be a simple connected graph with $n$ vertices. Let $F_2$ denote the $2$-element finite
field with addition identity $0$ and multiplication identity $1$, and let $F_2^n$ denote the set of
$n$-dimensional column vectors over $F_2$ indexed by $S$. We shall embed the graph $S$ in $F_2^n$
canonically. For $s\in S$, let $\widetilde{s}$ denote the characteristic vector of $s$ in $F_2^n$;
that is $\widetilde{s}=(0, 0, \ldots, 0, 1, 0, \ldots, 0)^t,$ where $1$ is in the position
corresponding to $s.$ The set $\{\widetilde{s}~|~s\in S\}$ is called the {\it standard basis} of
$F_2^n.$ In this setting, for $T\subseteq S$ the vector
$$\sum\limits_{s\in T} \widetilde{s}$$
represents the configuration with black states in $T$ in the flipping puzzle on $S$ as stated in
the introduction. We shall assign each move as an $n\times n$ matrix that acts on $F_2^n$ by left
multiplication. Let ${\rm Mat}_n(F_2)$ denote the set of $n\times n$ matrices over $F_2$ with rows
and columns indexed by $S.$

\begin{defn}\label{d2.1}
For $s\in S,$ we associate a matrix $\mathbf{s}\in {\rm Mat}_n(F_2)$, denoted by the bold type of
$s$, as
$$\mathbf{s}_{ab}=\left\{
\begin{array}{ll}
    1, & \hbox{if $a=b,$ or $b=s$ and $ab\in R$;} \\
0, & \hbox{else,} \\\end{array} \right.$$ where $a,b\in S$ and $R$ is the edge set of $S$. The
matrix $\mathbf{s}$ is called the {\it flipping move} associated with vertex $s.$
\end{defn}

It is easy to check that
 for $s, b\in S,$
$$\mathbf{s}\widetilde{b}=\left\{%
\begin{array}{ll}
  \widetilde{b}  , & \hbox{if $b\not= s$;} \\ %
\widetilde{b}+\sum\limits_{ab\in R}\widetilde{a} & \hbox{if $b=s$.} \\
\end{array}%
\right.
$$
Hence if a configuration $u\in F_2^n$ with $u_s=1$ then $\mathbf{s}u$ is the new configuration
after the move to select the vertex $s$. Note that if $u_s=0$, we have $\mathbf{s}u=u$, so we can
view the action of $\mathbf{s}$ on $u$ as a {\it feigning move} on $u$ which is not originally
defined as a move in the flipping puzzle. Note that $\mathbf{s}$ is an involution and hence is
invertible for $s\in S.$

\begin{defn}\label{d2.4}
Let $\mathbf{W}$ denote the subgroup of  ${\rm GL}_n(F_2)$ generated by the set
$\{\mathbf{s}~|~s\in S\}$ of flipping moves.  $\mathbf{W}$ is called the {\it flipping group} of
$S$.
\end{defn}

The flipping groups of simply-laced Dynkin diagrams are studied in \cite{hw:pre}. The flipping
group of the line graph of a tree with $n$ vertices is isomorphic to the symmetric group $S_n$ on
$n$ elements if $n\geq 3$ \cite{w:08}. However, we do not need the information of the flipping
group $\mathbf{W}$ of $S$ in this paper.

\section{The sets $\Pi,$ $\Pi_0$ and $\Pi_1$}\label{s3}

For the remaining of the paper, the following assumption is assumed.

\begin{assum}\label{ass1}
let $S$ be a simple connected graph with $n$ vertices $s_1,$ $s_2,$ $\ldots,$ $s_n,$ and  suppose
that the sequence  $s_1, s_2, \ldots, s_{n-1}$ is an induced  path, among them, $s_{j_1}, s_{j_2},
\ldots, s_{j_m}$   the neighbors of $s_n,$ where $1\leq j_1<j_2<\cdots<j_m\leq n-1.$ See Figure 1.
\end{assum}

\setlength{\unitlength}{1mm}
\begingroup\makeatletter\ifx\SetFigFont\undefined
\def\x#1#2#3#4#5#6#7\relax{\def\x{#1#2#3#4#5#6}}%
\expandafter\x\fmtname xxxxxx\relax \def\y{splain}
\gdef\SetFigFont#1#2#3{%
  \ifnum #1<17\tiny\else \ifnum #1<20\small\else
  \ifnum #1<24\normalsize\else \ifnum #1<29\large\else
  \ifnum #1<34\Large\else \ifnum #1<41\LARGE\else
     \huge\fi\fi\fi\fi\fi\fi
  \csname #3\endcsname}%
\else \gdef\SetFigFont#1#2#3{\begingroup
  \count@#1\relax \ifnum 25<\count@\count@25\fi
  \def\x{\endgroup\@setsize\SetFigFont{#2pt}}%
  \expandafter\x
    \csname \romannumeral\the\count@ pt\expandafter\endcsname
    \csname @\romannumeral\the\count@ pt\endcsname
  \csname #3\endcsname}%
\fi\endgroup
\begin{picture}(100, 110)

\put(30,25){\circle{1.5}} \put(38,25){\circle{1.5}} \put(54,58){\circle{1.5}}
\put(46,25){\circle{1.5}} \put(54,25){\circle{1.5}} \put(62,25){\circle{1.5}}
\put(70,25){\circle{1.5}} \put(78,25){\circle{1.5}} \put(30.5,25.5){\line( 1, 0){7}}
\put(71,25.5){\line( 1, 0){6.5}} \put(30,22){\scriptsize{$s_{n-1}$}}
\put(38,22){\scriptsize{$s_{n-2}$}} \put(46,22){\scriptsize{$s_{j_m}$}}
\put(54,22){\scriptsize{$s_{j_2}$}} \put(62,22){\scriptsize{$s_{j_1}$}}
\put(54,60){\scriptsize{$s_n$}} \put(70,22){\scriptsize{$s_2$}} \put(78,22){\scriptsize{$s_1$}}
\put(39,24){$\cdots$}\put(55,24){$\cdots$}\put(47,24){$\cdots$}\put(63,24){$\cdots$}
\put(54,26){\line( 0, 1){31}} \put(46,26){\line(1, 4){8}}\put(62,26){\line(-1, 4){8}} \put(48,
30){$\cdots$}
 \put(33,
10){{\bf Figure 1:} The graph $S$.}
\end{picture}

\bigskip
\bigskip

In the remaining of this paper, we always assume $n\geq 2$ and  set
\begin{equation}\label{e4.1}
\overline{1}=\widetilde{s}_1,~\overline{i+1}=\mathbf{s_i}\mathbf{s_{i-1}}\cdots
\mathbf{s_1}\overline{1}\quad{\rm for}~ 1\leq i\leq n-1.
\end{equation}
Set
\begin{eqnarray}
\Pi_{~}&=&\{\overline{1}, \overline{2}, \ldots, \overline{n}\},\label{ed1}\\
\Pi_0&=&\{\overline{i}\in\Pi~|~<\overline{i}, \widetilde{s}_n>=0\}\label{ed2},\\
\Pi_1&=&\Pi-\Pi_0,\label{ed3}
\end{eqnarray}
where $<~,~>$ is the dot product of vectors. From (\ref{e4.1}) and the construction,
\begin{eqnarray}
\Pi_0&=&\{\overline{i}~|~ \overline{i}=\widetilde{s}_{i-1}+\widetilde{s}_i,~1\leq i\leq n-1{\rm~or~}\overline{i}=\widetilde{s}_{n-1}\},\label{e3.2}\\
\Pi_1&=&\{\overline{i}~|~
\overline{i}=\widetilde{s}_{i-1}+\widetilde{s}_i+\widetilde{s}_n,~1\leq
i\leq
n-1{\rm~or~}\overline{i}=\widetilde{s}_{n-1}+\widetilde{s}_n\},\label{e3.3}
\end{eqnarray}
where $\widetilde{s}_0=0.$ Note that $1\leq |\Pi_0|, |\Pi_1| \leq n-1$ and $|\Pi_0|+|\Pi_1|=n.$
Precisely,
\begin{eqnarray}
\Pi_0 &=&\{\overline{i}\in\Pi~|~i\in (0, j_1]\cup (j_2, j_3] \cup \cdots \cup (j_{2k}, j_{2k+1}]\}\label{epr1}\\
\Pi_1 &=&\{\overline{i}\in\Pi~|~i\in (j_1, j_2]\cup (j_3, j_4] \cup \cdots \cup (j_{2k-1}, j_{2k}]\}
\end{eqnarray}
where $k= \lceil\frac{m}{2}\rceil$, $j_t:=n$ if $t>m$ and $(a, b]=\{x~|~x\in \mathbb{Z}, a<x\leq
b\}.$ In particular we have the following proposition.

\begin{prop}\label{p3.2}
$$
|\Pi_1|=
      \sum^{\lceil\frac{m}{2}\rceil}_{k=1}j_{2k}-j_{2k-1}.
$$ \hfill $\Box$
\end{prop}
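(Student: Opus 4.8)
The plan is to read the proposition off from the explicit index description of $\Pi_0$ recorded in $(\ref{epr1})$. By $(\ref{ed3})$ one has $\Pi_1 = \Pi \setminus \Pi_0$, so $(\ref{epr1})$ together with the observation that the blocks $(0,j_1],(j_1,j_2],(j_2,j_3],\dots$ tile $(0,n]$ gives $\Pi_1 = \{\overline{i} \in \Pi ~|~ i \in (j_1,j_2] \cup (j_3,j_4] \cup \cdots \cup (j_{2k-1},j_{2k}]\}$ with $k = \lceil m/2\rceil$ and the convention $j_t = n$ for $t > m$. It then only remains to count, and for this there are three small points: (i) the map $i \mapsto \overline{i}$ from $\{1,2,\dots,n\}$ onto $\Pi$ is a bijection, so $|\Pi_1|$ is the number of indices $i$ lying in the displayed union of intervals; (ii) those half-open integer intervals are pairwise disjoint; and (iii) each contributes $j_{2k} - j_{2k-1}$ indices. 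Summing over $k$ yields the claimed formula.

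For (i), the formulas $(\ref{e3.2})$ and $(\ref{e3.3})$ show that the restriction of $\overline{i}$ to the coordinates $s_1,\dots,s_{n-1}$ equals $\widetilde{s}_{i-1}+\widetilde{s}_i$ for $1 \le i \le n-1$ (with $\widetilde{s}_0 = 0$) and equals $\widetilde{s}_{n-1}$ for $i = n$; these $n$ vectors are pairwise distinct, so $i \mapsto \overline{i}$ is injective and $|\Pi| = n$. For (ii), since $1 \le j_1 < j_2 < \cdots < j_m \le n-1$ and $j_t = n$ for $t > m$, the truncated sequence $j_1 < j_2 < \cdots < j_{2k}$ is still strictly increasing, so consecutive blocks $(j_{2\ell-1},j_{2\ell}]$ and $(j_{2\ell+1},j_{2\ell+2}]$ are separated by $(j_{2\ell},j_{2\ell+1}]$ and hence disjoint. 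For (iii), $|(a,b] \cap \mathbb{Z}| = b - a$ for integers $a \le b$.

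The only place that needs attention is the bookkeeping forced by the convention $j_t = n$ for $t > m$: when $m$ is odd the last block is $(j_m,n]$ rather than $(j_{m-1},j_m]$, and one should check that in every case $j_{2k-1} < j_{2k}$ so that no block contributes a negative count. This is the step most likely to hide an off-by-one slip, though it is routine; as a consistency check, adding the analogous count for $\Pi_0$ coming from $(\ref{epr1})$ makes the total telescope to $n$, in agreement with the identity $|\Pi_0| + |\Pi_1| = n$ noted before the statement.
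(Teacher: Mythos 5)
Your argument is correct and matches the paper's (implicit) proof: the proposition is read off directly from the interval description of $\Pi_1$ preceding it, by counting the integers in the pairwise disjoint blocks $(j_{2k-1},j_{2k}]$, exactly as you do. Your extra checks (injectivity of $i\mapsto\overline{i}$, disjointness, and the convention $j_t=n$ for $t>m$) are just careful bookkeeping of what the paper leaves unstated.
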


From (\ref{e3.2}), (\ref{e3.3}), we immediately have the following lemma.

\begin{lem}\label{l3.2} For $1\leq i\leq n-1,$
$$\overline{1}+\overline{2}+\cdots+\overline{i}=\left\{
                                                  \begin{array}{ll}
                                                    \widetilde{s}_i+ \widetilde{s}_n, & \hbox{if $|[\overline{i}]\cap \Pi_1|$ is odd;} \\
                                                    \widetilde{s}_i, & \hbox{if $|[\overline{i}]\cap \Pi_1|$ is even,}
                                                  \end{array}
                                                \right.$$
and
$$\overline{1}+\overline{2}+\cdots+\overline{n}=\left\{
                                                  \begin{array}{ll}
                                                    \widetilde{s}_n, & \hbox{if $|\Pi_1|$ is odd;} \\
                                                    0, & \hbox{if $|\Pi_1|$ is even,}
                                                  \end{array}
                                                \right.$$
where $[\overline{i}]:=\{\overline{1}, \overline{2}, \ldots, \overline{i}\}.$ \hfill $\Box$
\end{lem}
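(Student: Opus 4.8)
The plan is to prove the identities by direct substitution of the explicit descriptions of the vectors $\overline{i}$ supplied by (\ref{e3.2}) and (\ref{e3.3}), followed by a telescoping cancellation over $F_2$. By (\ref{e3.2}) and (\ref{e3.3}), for each $j$ with $1\le j\le n-1$ one may write $\overline{j}=\widetilde{s}_{j-1}+\widetilde{s}_j+\epsilon_j\widetilde{s}_n$, where $\epsilon_j\in F_2$ is $1$ exactly when $\overline{j}\in\Pi_1$ and where $\widetilde{s}_0=0$. Summing this over $1\le j\le i$ for a fixed $i\le n-1$, every interior standard basis vector $\widetilde{s}_j$ with $1\le j\le i-1$ occurs exactly twice and hence cancels, so the sum collapses to $\widetilde{s}_0+\widetilde{s}_i=\widetilde{s}_i$ together with $\big(\textstyle\sum_{j=1}^{i}\epsilon_j\big)\widetilde{s}_n$. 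Since $\sum_{j=1}^{i}\epsilon_j$ is, modulo $2$, the number of indices $j\le i$ with $\overline{j}\in\Pi_1$, that is $|[\overline{i}]\cap\Pi_1|$ taken mod $2$, this gives the first displayed formula at once.

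For the second formula I would add the term $\overline{n}$ to the $i=n-1$ instance of the first formula. Here one uses that, by (\ref{e3.2}) and (\ref{e3.3}), the last vector $\overline{n}$ has the slightly different shape $\overline{n}=\widetilde{s}_{n-1}+\epsilon_n\widetilde{s}_n$, with $\epsilon_n=1$ iff $\overline{n}\in\Pi_1$ (the vertex $s_n$ lies off the path, so the pattern $\widetilde{s}_{j-1}+\widetilde{s}_j$ valid for $j\le n-1$ does not extend to $j=n$). Adding $\overline{n}$ to $\overline{1}+\cdots+\overline{n-1}=\widetilde{s}_{n-1}+\big(|[\overline{n-1}]\cap\Pi_1|\bmod 2\big)\widetilde{s}_n$, the two copies of $\widetilde{s}_{n-1}$ cancel, and the coefficient of $\widetilde{s}_n$ becomes $|[\overline{n-1}]\cap\Pi_1|+\epsilon_n$ modulo $2$. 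Because $\overline{n}$ is the unique element of $\Pi$ not lying in $[\overline{n-1}]$, this is $|\Pi_1|\bmod 2$, which is the second formula.

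I expect no genuine obstacle: the lemma really is an immediate corollary of (\ref{e3.2})--(\ref{e3.3}), as indicated. The only points requiring a little care are the bookkeeping of the telescoping sum in characteristic $2$ — in particular the convention $\widetilde{s}_0=0$, which is what lets the $j=1$ term fit the common template — and the observation that $\overline{n}$ must be handled separately since its description does not follow the $\widetilde{s}_{j-1}+\widetilde{s}_j$ shape used for $j\le n-1$. Once these are in place, matching the parity of the $\widetilde{s}_n$-coefficient against $|[\overline{i}]\cap\Pi_1|$ (respectively $|\Pi_1|$) is routine.
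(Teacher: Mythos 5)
Your proof is correct and follows the paper's own route: the paper derives Lemma~\ref{l3.2} directly from (\ref{e3.2})--(\ref{e3.3}), and your telescoping computation (with the convention $\widetilde{s}_0=0$ and the separate treatment of $\overline{n}$) is exactly the omitted verification. Nothing further is needed.
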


From Lemma~\ref{l3.2} and (\ref{epr1}) we have the following lemma.

\begin{lem}\label{l4.1}
~~~~~~$\displaystyle\sum\limits_{\overline{i}\in \Pi_0}
\overline{i}=\sum\limits_{k=1}^m\widetilde{s}_{j_k}.$\hfill $\Box$
\end{lem}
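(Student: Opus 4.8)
The plan is to read off $\Pi_0$ from (\ref{epr1}) as a disjoint union of blocks of consecutive $\overline{i}$'s and to collapse the sum over each block using the partial-sum formula of Lemma~\ref{l3.2}. Write $j_0:=0$ and $\widetilde{s}_0:=0$, keep the convention $j_t:=n$ for $t>m$, and put $k:=\lceil m/2\rceil$; then (\ref{epr1}) exhibits $\Pi_0$ as the disjoint union of the blocks $B_t:=\{\overline{i}\mid j_{2t}<i\le j_{2t+1}\}$, $0\le t\le k$, so it suffices to compute $\sum_{\overline{i}\in B_t}\overline{i}$ for each $t$ and add.

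First I would treat a block $B_t$ with $j_{2t+1}\le n-1$. Working over $F_2$, $\sum_{\overline{i}\in B_t}\overline{i}=(\overline{1}+\cdots+\overline{j_{2t+1}})+(\overline{1}+\cdots+\overline{j_{2t}})$, the second summand being the empty sum $0=\widetilde{s}_0$ when $t=0$. Every $\overline{i}$ with $j_{2t}<i\le j_{2t+1}$ lies in $\Pi_0$, so $[\overline{j_{2t}}]\cap\Pi_1=[\overline{j_{2t+1}}]\cap\Pi_1$; in particular these sets have the same parity, so when the first displayed formula of Lemma~\ref{l3.2} is applied to both partial sums the two $\widetilde{s}_n$-terms cancel, leaving $\sum_{\overline{i}\in B_t}\overline{i}=\widetilde{s}_{j_{2t+1}}+\widetilde{s}_{j_{2t}}$.

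The one step that needs care is the top block. A block with $j_{2t+1}=n$ forces $t=k$ (since $j_1<\cdots<j_m\le n-1$). If $m$ is odd then $j_{2k}=n$ too and $B_k$ is empty. If $m$ is even then $j_{2k}=j_m\le n-1$ while the upper index of $B_k$ is genuinely $n$, so one must use the \emph{second} displayed formula of Lemma~\ref{l3.2}: $\overline{1}+\cdots+\overline{n}$ equals $\widetilde{s}_n$ or $0$ according to the parity of $|\Pi_1|$, and this parity equals that of $|[\overline{j_m}]\cap\Pi_1|$ because every $\overline{i}$ with $j_m<i\le n$ lies in $\Pi_0$; cancelling the $\widetilde{s}_n$-terms against $\overline{1}+\cdots+\overline{j_m}$ then yields $\sum_{\overline{i}\in B_k}\overline{i}=\widetilde{s}_{j_m}$.

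Finally I would add the block contributions. For $m$ odd the total is $\sum_{t=0}^{k-1}(\widetilde{s}_{j_{2t+1}}+\widetilde{s}_{j_{2t}})$; here the odd subscripts $j_1,j_3,\dots,j_m$ and the even subscripts $j_0,j_2,\dots,j_{m-1}$ together range over $j_0,j_1,\dots,j_m$, giving $\sum_{r=1}^{m}\widetilde{s}_{j_r}$ since $\widetilde{s}_{j_0}=0$. For $m$ even the total is $\sum_{t=0}^{k-1}(\widetilde{s}_{j_{2t+1}}+\widetilde{s}_{j_{2t}})+\widetilde{s}_{j_m}$, where the sum supplies $\widetilde{s}_{j_0}+\cdots+\widetilde{s}_{j_{m-1}}$ and the extra term promotes it to $\sum_{r=1}^{m}\widetilde{s}_{j_r}$. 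Either way $\sum_{\overline{i}\in\Pi_0}\overline{i}=\sum_{k=1}^{m}\widetilde{s}_{j_k}$. The only real obstacle is bookkeeping: tracking the index ranges of the blocks and distinguishing the two parities of $m$ in the top block; everything else is dictated by Lemma~\ref{l3.2}.
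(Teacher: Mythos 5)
Your proof is correct and follows exactly the route the paper indicates: the paper derives Lemma~\ref{l4.1} as an immediate consequence of Lemma~\ref{l3.2} and the block description (\ref{epr1}) of $\Pi_0$, which is precisely the decomposition-and-telescoping argument you carry out, including the correct handling of the top block when $m$ is even. You have simply supplied the bookkeeping the paper leaves implicit.
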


From (\ref{e4.1}) we have the following lemma.

\begin{lem}\label{l4.2}
$\mathbf{s_i}\overline{i}=\overline{i+1},$ $\mathbf{s_i}\overline{i+1}=\overline{i}$ and
$\mathbf{s_i}$ fixes other vectors in $\Pi-\{\overline{i},\overline{i+1}\}$ for $1\leq i\leq n-1.$
\hfill $\Box$
\end{lem}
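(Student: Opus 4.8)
The plan is to reduce the whole statement to the explicit coordinate descriptions of the vectors $\overline{i}$ recorded in (\ref{e3.2}) and (\ref{e3.3}), combined with the formula for the action of a flipping move on the standard basis given right after Definition~\ref{d2.1}. The key elementary observation is the following: since $\mathbf{s_i}$ acts $F_2$-linearly and satisfies $\mathbf{s_i}\widetilde{b}=\widetilde{b}$ for every $b\in S$ with $b\neq s_i$, any vector $v\in F_2^n$ with $v_{s_i}=0$ is fixed by $\mathbf{s_i}$; indeed, in the expansion $v=\sum_{b}v_b\widetilde{b}$ the only standard basis vector that $\mathbf{s_i}$ can move, namely $\widetilde{s}_i$, occurs with coefficient $0$, while all the others are fixed.

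First I would settle the two identities $\mathbf{s_i}\overline{i}=\overline{i+1}$ and $\mathbf{s_i}\overline{i+1}=\overline{i}$. The first is immediate from the recursion (\ref{e4.1}): for $i\geq 2$ we have $\overline{i}=\mathbf{s_{i-1}}\cdots\mathbf{s_1}\overline{1}$, hence $\mathbf{s_i}\overline{i}=\mathbf{s_i}\mathbf{s_{i-1}}\cdots\mathbf{s_1}\overline{1}=\overline{i+1}$, while for $i=1$ it is just the defining relation $\mathbf{s_1}\overline{1}=\overline{2}$; applying the involution $\mathbf{s_i}$ to both sides then gives $\overline{i}=\mathbf{s_i}\overline{i+1}$. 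Next, for the fixing assertion, by the observation above it suffices to show that the $s_i$-coordinate of $\overline{j}$ is $0$ whenever $j\notin\{i,i+1\}$. By (\ref{e3.2}) and (\ref{e3.3}), for $1\leq j\leq n-1$ the vector $\overline{j}$ equals $\widetilde{s}_{j-1}+\widetilde{s}_j$ or $\widetilde{s}_{j-1}+\widetilde{s}_j+\widetilde{s}_n$ (with the convention $\widetilde{s}_0=0$), so its $s_i$-coordinate is nonzero exactly when $i\in\{j-1,j\}$, i.e. exactly when $j\in\{i,i+1\}$ (the summand $\widetilde{s}_n$ is irrelevant since $i\leq n-1<n$); and $\overline{n}$ equals $\widetilde{s}_{n-1}$ or $\widetilde{s}_{n-1}+\widetilde{s}_n$, so its $s_i$-coordinate is nonzero exactly when $i=n-1$, i.e. exactly when $n\in\{i,i+1\}$. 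Hence for $j\notin\{i,i+1\}$ the $s_i$-coordinate of $\overline{j}$ vanishes and $\mathbf{s_i}\overline{j}=\overline{j}$.

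I do not expect a genuine obstacle here; the argument is a short linear-algebra verification. The only point that needs care is the bookkeeping at the two ends of the index range: using $\widetilde{s}_0=0$ when $j=1$, and using the separate form of $\overline{n}$ from (\ref{e3.2})--(\ref{e3.3}) (rather than blindly writing $\widetilde{s}_{n-1}+\widetilde{s}_n$) when $j=n$. Finally, since the $\overline{i}$ are pairwise distinct, as follows from $|\Pi_0|+|\Pi_1|=n$ noted after (\ref{e3.3}), the set $\Pi-\{\overline{i},\overline{i+1}\}$ is precisely $\{\overline{j}\mid j\neq i,i+1\}$, so the three assertions together describe the full action of $\mathbf{s_i}$ on $\Pi$.
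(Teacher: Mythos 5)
Your proof is correct and follows essentially the route the paper intends: the paper states the lemma as an immediate consequence of the recursion (\ref{e4.1}) (equivalently of the explicit coordinate descriptions (\ref{e3.2})--(\ref{e3.3})) and omits the verification, which is exactly what you carry out, using the involution property of $\mathbf{s_i}$ for the second identity and the vanishing of the $s_i$-coordinate for the fixing assertion.
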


From~Lemma~\ref{l4.2}, $\mathbf{s_i}$ acts on $\Pi$ as the transposition $(\overline{i},
\overline{i+1})$ in the symmetric group $S_n$ of $\Pi$ for $1\leq i\leq n-1.$ Let $\mathbf{W}$
denote the flipping group of $S.$  By a {\it $\mathbf{W}$-submodule} of $F_2^n$ we mean a subspace
$U$ of $F_2^n$ such that $\mathbf{W}U\subseteq U.$

\begin{cor}\label{c3.3}
The subspace $U$ spanned by the vectors in $\Pi$ is a $\mathbf{W}$-submodule of $F_2^n.$
\end{cor}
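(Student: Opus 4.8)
The plan is to show that $U=\mathrm{span}(\Pi)$ is stable under each generator $\mathbf{s}_i$ of $\mathbf{W}$; since these generators generate $\mathbf{W}$ and each acts linearly on $F_2^n$, stability under the generators forces $\mathbf{W}U\subseteq U$, which is exactly what it means for $U$ to be a $\mathbf{W}$-submodule. So it suffices to check $\mathbf{s}\,U\subseteq U$ for every $\mathbf{s}\in\{\mathbf{s}_1,\ldots,\mathbf{s}_n\}$, and by linearity it suffices to check $\mathbf{s}\,\overline{i}\in U$ for each basis-like generator $\overline{i}$ of $U$ and each flipping move $\mathbf{s}$.

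First I would dispose of the generators $\mathbf{s}_1,\ldots,\mathbf{s}_{n-1}$: by Lemma~\ref{l4.2}, $\mathbf{s}_i$ permutes the set $\Pi$ (it swaps $\overline{i}$ and $\overline{i+1}$ and fixes the rest), so $\mathbf{s}_i\overline{j}\in\Pi\subseteq U$ for all $j$, hence $\mathbf{s}_i U\subseteq U$. This is immediate and needs no computation. The remaining, and only substantive, case is the last flipping move $\mathbf{s}_n$, the vertex not on the induced path. Here I would compute $\mathbf{s}_n\overline{i}$ directly using the explicit descriptions of $\Pi_0$ and $\Pi_1$ in (\ref{e3.2}) and (\ref{e3.3}) together with the formula for $\mathbf{s}\widetilde{b}$ recorded after Definition~\ref{d2.1}: $\mathbf{s}_n$ fixes every $\widetilde{s}_a$ with $a\neq n$ and sends $\widetilde{s}_n\mapsto \widetilde{s}_n+\sum_{k=1}^m\widetilde{s}_{j_k}$.

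Thus for $\overline{i}\in\Pi_0$ (so $\overline{i}=\widetilde{s}_{i-1}+\widetilde{s}_i$ or $\widetilde{s}_{n-1}$, none of whose coordinates is $s_n$) we get $\mathbf{s}_n\overline{i}=\overline{i}\in U$, while for $\overline{i}\in\Pi_1$ (so $\overline{i}=\overline{i}'+\widetilde{s}_n$ with $\overline{i}'\in\Pi_0$ the corresponding weight-two or weight-one vector) we get
$$\mathbf{s}_n\overline{i}=\overline{i}'+\widetilde{s}_n+\sum_{k=1}^m\widetilde{s}_{j_k}=\overline{i}+\sum_{k=1}^m\widetilde{s}_{j_k}.$$
By Lemma~\ref{l4.1}, $\sum_{k=1}^m\widetilde{s}_{j_k}=\sum_{\overline{j}\in\Pi_0}\overline{j}$, which lies in $U$; hence $\mathbf{s}_n\overline{i}=\overline{i}+\sum_{\overline{j}\in\Pi_0}\overline{j}\in U$. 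This handles all generators and all spanning vectors, completing the argument. The only potential obstacle is the bookkeeping in the $\mathbf{s}_n$ case — correctly matching each $\overline{i}\in\Pi_1$ to its $\Pi_0$-counterpart and invoking Lemma~\ref{l4.1} — but this is routine once the explicit forms (\ref{e3.2}), (\ref{e3.3}) are in hand.
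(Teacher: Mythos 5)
Your proposal is correct and follows essentially the same route as the paper: Lemma~\ref{l4.2} handles $\mathbf{s_1},\ldots,\mathbf{s_{n-1}}$ since they permute $\Pi$, and for $\mathbf{s_n}$ one computes $\mathbf{s_n}\overline{i}=\overline{i}$ for $\overline{i}\in\Pi_0$ and $\mathbf{s_n}\overline{i}=\overline{i}+\sum_{\overline{j}\in\Pi_0}\overline{j}\in U$ for $\overline{i}\in\Pi_1$ via Lemma~\ref{l4.1}. Your write-up merely makes explicit the coordinate computation behind the $\mathbf{s_n}$ case that the paper states without detail.
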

\begin{proof}
From Lemma~\ref{l4.2}, $U$ is closed under the action of $\mathbf{s_1}, \mathbf{s_2}, \ldots,
\mathbf{s_{n-1}}.$ Note that for $\overline{i}\in \Pi$ we have \begin{eqnarray*} \mathbf{s_n}
\overline{i}&=&\left\{
  \begin{array}{ll}
   \overline{i}, & \hbox{if  $\overline{i}\in \Pi_0$;} \\
  \overline{i}+\displaystyle\sum\limits_{\overline{j}\in
\Pi_0} \overline{j}, & \hbox{if $\overline{i}\in \Pi_1$}
  \end{array}
\right.\\
&\in& U
\end{eqnarray*}
by Lemma~\ref{l4.1}.
\end{proof}

\begin{prop}\label{p3.4} The subspace $U$ in Corollary~\ref{c3.3} has the basis
$$\left\{
      \begin{array}{ll}
        \Pi, &\hbox{ if $|\Pi_1|$ is odd;} \\
        \Pi-\{\overline{j}\}, &\hbox{if $|\Pi_1|$ is even}
      \end{array}
    \right.$$
    for any $\overline{j}\in\Pi.$
Moreover $\widetilde{s}_n\not\in U$ if $|\Pi_1|$ is even.
\end{prop}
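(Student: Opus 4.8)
The plan is to isolate one auxiliary fact and let Lemma~\ref{l3.2} do the rest: \emph{the set $\{\overline{1},\ldots,\overline{n-1},\widetilde{s}_n\}$ is a basis of $F_2^n$}. Granting this, put $V=\mathrm{span}\{\overline{1},\ldots,\overline{n-1}\}$; then $\dim V=n-1$, $\widetilde{s}_n\notin V$, and $U=\mathrm{span}(\Pi)=V+\mathrm{span}\{\overline{n}\}$. The entire statement then reduces to deciding whether $\overline{n}\in V$, which is precisely the content of the second displayed identity in Lemma~\ref{l3.2}.

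To prove the auxiliary fact I would use only (\ref{e3.2}) and (\ref{e3.3}) (equivalently, the first identity of Lemma~\ref{l3.2}): for $1\le i\le n-1$, $\overline{i}$ equals $\widetilde{s}_{i-1}+\widetilde{s}_i$ or $\widetilde{s}_{i-1}+\widetilde{s}_i+\widetilde{s}_n$, with the convention $\widetilde{s}_0=0$. Summing the first $i$ of these telescopes over $F_2$, giving $\overline{1}+\cdots+\overline{i}\in\{\widetilde{s}_i,\ \widetilde{s}_i+\widetilde{s}_n\}$; hence $\widetilde{s}_i$ lies in $\mathrm{span}\{\overline{1},\ldots,\overline{i},\widetilde{s}_n\}$ for every $1\le i\le n-1$. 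Therefore $\mathrm{span}\{\overline{1},\ldots,\overline{n-1},\widetilde{s}_n\}$ contains the standard basis, so it is all of $F_2^n$; being a spanning set of cardinality $n$, it is a basis. In particular $\widetilde{s}_n\notin V$ and $\dim U\in\{n-1,n\}$.

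I then split on the parity of $|\Pi_1|$ via Lemma~\ref{l3.2}. If $|\Pi_1|$ is odd, then $\overline{1}+\cdots+\overline{n}=\widetilde{s}_n$, i.e. $\overline{n}=\widetilde{s}_n+(\overline{1}+\cdots+\overline{n-1})$; expanding $\overline{n}$ in the basis $\{\overline{1},\ldots,\overline{n-1},\widetilde{s}_n\}$ shows its $\widetilde{s}_n$-coordinate is $1$, so $\overline{n}\notin V$. Hence the $n$ vectors of $\Pi$ are linearly independent, and since $U=\mathrm{span}(\Pi)$ they form a basis of $U$. If $|\Pi_1|$ is even, then $\overline{1}+\cdots+\overline{n}=0$, so $\overline{n}=\overline{1}+\cdots+\overline{n-1}\in V$; thus $U=V$, $\dim U=n-1$, and $\widetilde{s}_n\notin U$ because $\widetilde{s}_n\notin V$. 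Finally, for every $j$ the relation $\overline{1}+\cdots+\overline{n}=0$ yields $\overline{j}=\sum_{i\ne j}\overline{i}$, so $\Pi-\{\overline{j}\}$ still spans $U$; a spanning set of cardinality $n-1=\dim U$ is a basis, which proves the claim for each $\overline{j}\in\Pi$.

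The only point requiring care is the bookkeeping in the auxiliary fact: checking that the telescoping sum is handled correctly over $F_2$ with $\widetilde{s}_0=0$, and that the spurious $\widetilde{s}_n$ summands never prevent recovering $\widetilde{s}_1,\ldots,\widetilde{s}_{n-1}$. Beyond that the argument is elementary linear algebra over $F_2$, used repeatedly via the principle that a spanning set whose cardinality equals the dimension is a basis; I foresee no serious obstacle.
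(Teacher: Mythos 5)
Your argument is correct and follows essentially the same route as the paper: both rest entirely on Lemma~\ref{l3.2}, first using the telescoping partial sums to see that $\overline{1},\ldots,\overline{n-1}$ are linearly independent with $\widetilde{s}_n$ outside their span, then splitting on the parity of $|\Pi_1|$ via the identity for $\overline{1}+\cdots+\overline{n}$. You merely spell out the details (the auxiliary basis $\{\overline{1},\ldots,\overline{n-1},\widetilde{s}_n\}$ and the removal of an arbitrary $\overline{j}$) that the paper leaves implicit.
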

\begin{proof}
By Lemma~\ref{l3.2}, $\overline{1}, \overline{2}, \ldots, \overline{n-1}$ are linearly independent
and hence $U$ has dimension at least $n-1.$ Since $\widetilde{s}_n\not\in {\rm Span}\{\overline{1},
\overline{2}, \ldots, \overline{n-1}\},$ the proposition follows from the second case of
Lemma~\ref{l3.2}.
\end{proof}

Let $\mathbf{W}_{P}$ denote the subgroup of $\mathbf{W}$ generated by $\mathbf{s_1},$
$\mathbf{s_2},$ $\ldots,$ $\mathbf{s_{n-1}}.$ From Lemma~\ref{l4.2}, Proposition~\ref{p3.4} and the
fact $G\widetilde{s}_n=\widetilde{s}_n$ for $G\in\mathbf{W}_{P}$,
 we have the following corollary.

\begin{cor}\label{c3.5}
The subgroup $\mathbf{W}_{P}$ of $\mathbf{W}$  is isomorphic to the symmetric group $S_n$ on
$\Pi.$\hfill $\Box$
\end{cor}

Let $S'$ be another graph satisfying Assumption~\ref{ass1}, $\mathbf{s'_n}$ be the corresponding
matrix in Definition~\ref{d2.1} and $\Pi', \Pi'_0, \Pi'_1$ be the corresponding sets of vectors in
(\ref{ed1})-(\ref{ed3}). For this moment we suppose $|\Pi_1|=|\Pi'_1|.$ Let
$f:\Pi\cup\{\widetilde{s}_n\}\rightarrow \Pi'\cup\{\widetilde{s}~'_n\}$ be a bijection such that
$f(\widetilde{s}_n)=\widetilde{s}~'_n$ and $f(\Pi_1)=\Pi'_1.$ Then
$$\mathbf{s'_n}f(\widetilde{s}_n)=f(\widetilde{s}_n)+\displaystyle\sum\limits_{\overline{j}\in
\Pi_0}f(\overline{j})$$
and
$$\mathbf{s'_n}f(\overline{i})=\left\{
                                   \begin{array}{ll}
                                     f(\overline{i}), & \hbox{if $\overline{i}\in \Pi_0$;} \\
                                     f(\overline{i})+\displaystyle\sum\limits_{\overline{j}\in
\Pi_0}f(\overline{j}), & \hbox{if $\overline{i}\not\in\Pi_0$}
                                   \end{array}
                                 \right.
$$
are corresponding to the way that $\mathbf{s_n}$ acts on $\Pi\cup\{\widetilde{s}_n\}.$
 From Corollary~\ref{c3.5} and the above arguments we have the following
theorem.

\begin{thm}\label{t3.9}
$\mathbf{W}$ is unique up to isomorphism among all the graphs satisfying Assumption~\ref{ass1} with
a given cardinality $|\Pi_1|$ computed from (\ref{p3.2}).~~~~~~~~ \hfill $\Box$
\end{thm}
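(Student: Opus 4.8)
The plan is to build an explicit isomorphism between the flipping groups $\mathbf{W}$ and $\mathbf{W}'$ of two graphs $S$ and $S'$ satisfying Assumption~\ref{ass1} with $|\Pi_1| = |\Pi'_1|$, by showing that each acts on an $(n+1)$-element or $n$-element set in essentially the same combinatorial way. First I would fix the bijection $f:\Pi\cup\{\widetilde s_n\}\to\Pi'\cup\{\widetilde s\,'_n\}$ already introduced in the paragraph before the theorem: it sends $\widetilde s_n$ to $\widetilde s\,'_n$ and maps $\Pi_1$ onto $\Pi'_1$ (this is possible precisely because $|\Pi_1|=|\Pi'_1|$ forces $|\Pi_0|=|\Pi'_0|$ as well, since both sums equal $n$). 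The goal is to promote $f$ to a linear isomorphism of the ambient representation spaces that conjugates generators to generators.

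The key steps, in order, are as follows. (1) Observe that by Corollary~\ref{c3.3} and Proposition~\ref{p3.4} the span $U$ of $\Pi$ (respectively $U'$ of $\Pi'$) together with $\widetilde s_n$ (resp.\ $\widetilde s\,'_n$) is a $\mathbf{W}$-invariant subspace containing all the action data; moreover $\Pi\cup\{\widetilde s_n\}$ spans an $\mathbf{W}$-invariant space of dimension $n$ when $|\Pi_1|$ is odd and of dimension $n$ (namely $\Pi-\{\overline j\}$ plus $\widetilde s_n$) when $|\Pi_1|$ is even — so in every case the restriction of $\mathbf{W}$ to this space is faithful enough to recover $\mathbf{W}$, since the generators $\mathbf{s}_i$ already act faithfully on $\Pi$ by Corollary~\ref{c3.5} and $\mathbf{s}_n$ is determined by its action on $\Pi_1$ and $\widetilde s_n$. (2) Use Lemma~\ref{l4.2}: for $1\le i\le n-1$ the generator $\mathbf{s}_i$ acts on $\Pi$ as the transposition $(\overline i,\overline{i+1})$ and fixes $\widetilde s_n$; by construction the analogous statement holds for $\mathbf{s}'_i$ on $\Pi'$ — but here one must be slightly careful, because $f$ need not respect the linear order $\overline 1,\ldots,\overline n$, so $f$ conjugates the $S_{n}$-worth of generators $\{\mathbf s_1,\ldots,\mathbf s_{n-1}\}$ not to $\{\mathbf s'_1,\ldots,\mathbf s'_{n-1}\}$ literally but to some conjugate set of transpositions generating the same symmetric group $S_n$ on $\Pi'$ (Corollary~\ref{c3.5}). (3) Use the displayed formulas for $\mathbf s_n$ and $\mathbf s'_n$ right before the theorem, combined with Lemma~\ref{l4.1}: $\mathbf s_n$ fixes $\Pi_0$ pointwise, sends each $\overline i\in\Pi_1$ to $\overline i + \sum_{\overline j\in\Pi_0}\overline j$, and sends $\widetilde s_n$ to $\widetilde s_n+\sum_{\overline j\in\Pi_0}\overline j$; since $f(\Pi_0)=\Pi'_0$ and $f$ is linear, $f$ intertwines $\mathbf s_n$ with $\mathbf s'_n$. (4) Conclude that $G\mapsto f\circ G\circ f^{-1}$ is a well-defined group homomorphism $\mathbf{W}_P\cdot\langle\mathbf s_n\rangle\to\mathbf{W}'$ carrying the full generating set of $\mathbf{W}$ to that of $\mathbf{W}'$, hence a surjective homomorphism $\mathbf{W}\to\mathbf{W}'$; by symmetry of the construction (swap roles of $S$ and $S'$ using $f^{-1}$) it is an isomorphism.

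The main obstacle I anticipate is step (1)–(2): making rigorous the claim that the common faithful representation lives on exactly the $n$-dimensional space $\mathrm{Span}(\Pi\cup\{\widetilde s_n\})$ when $|\Pi_1|$ is even — one has to check that $\widetilde s_n\notin U$ (which is Proposition~\ref{p3.4}) so that this span is genuinely $n$-dimensional and the action of $\mathbf W$ on it is faithful, and separately that when $|\Pi_1|$ is odd the vector $\widetilde s_n = \overline 1+\cdots+\overline n \in U$ so the representation space is just $U$, still $n$-dimensional. A secondary subtlety, already flagged in step (2), is that $f$ only needs to match $\widetilde s_n\leftrightarrow\widetilde s\,'_n$ and $\Pi_1\leftrightarrow\Pi'_1$, so the identification is of abstract symmetric-group-plus-one-extra-generator data, not of labelled generators; one should phrase the conclusion as "$\mathbf W\cong\mathbf W'$" via the induced map on the common model $S_{|\Pi|}\ltimes(\text{data determined by }|\Pi_1|)$ rather than claiming $\mathbf s_i\mapsto\mathbf s'_i$. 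Once these points are pinned down, the theorem follows by assembling Corollary~\ref{c3.5}, Lemma~\ref{l4.1}, Lemma~\ref{l4.2}, Proposition~\ref{p3.4}, and the two displayed $\mathbf s_n$-action formulas.
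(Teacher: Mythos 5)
Your proposal is correct and takes essentially the same route as the paper: the paper's own argument is exactly the bijection $f$ with $f(\widetilde{s}_n)=\widetilde{s}\,'_n$ and $f(\Pi_1)=\Pi'_1$, combined with Corollary~\ref{c3.5} and the displayed formulas (via Lemma~\ref{l4.1}) showing that $\mathbf{s_n}$ and $\mathbf{s'_n}$ act in corresponding ways on the spanning sets $\Pi\cup\{\widetilde{s}_n\}$ and $\Pi'\cup\{\widetilde{s}\,'_n\}$. Your extra care about the linear extension of $f$ and about generators only matching up to relabelling is just a more detailed write-up of that same argument.
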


The flipping group $\mathbf{W}$ of a simply-laced Dynkin diagram $S$ is isomorphic to the quotient
group $W/Z(W)$ of the Coxeter group $W$ of $S$ by its center $Z(W)$\cite{hw:pre}, and the study of
Coxeter groups $W$ is notoriously interesting. With this in mind, one might expect expect the
flipping groups are very different on different graphs. Theorem~\ref{t3.9} is surprising since up
to isomorphism the number of flipping groups is at most $n-1$, which is much less than the number
of graphs satisfying Assumption~\ref{ass1}.

\section{Simple basis $\Delta$ of $F_2^n$}\label{s4}

To better describe the orbits in $\mathcal{P}$ later,  we need to choose a new basis of $F_2^n.$
Set
$$\Delta:=\left\{
            \begin{array}{ll}
              \Pi, & \hbox{if $|\Pi_1|$ is odd;} \\
              \Pi\cup\{\overline{n+1}\}-\{\overline{n}\}, & \hbox{if $|\Pi_1|$ is even,}
            \end{array}
          \right.
$$
where $\overline{n+1}:=\widetilde{s}_n.$ With referring to Proposition~\ref{p3.4}, $\Delta$ is a
basis of $F_2^n.$ To distinguish from the {\it standard basis} $\{\widetilde{s}_1, \widetilde{s}_2,
\ldots, \widetilde{s}_n\}$ of $F_2^n$, we refer $\Delta$ to the {\it simple basis} of $F_2^n$. For
each vector $u\in F_2^n$, $u$ can be written as a linear combination of elements in $\Delta,$ so
let $\Delta(u)$ be the subset of $\Delta$ such that
$$u=\sum\limits_{\overline{i}\in\Delta(u)}\overline{i},$$
set $sw(u):=|\Delta(u)|$, and we refer $sw(u)$ to be the {\it simple weight} of $u$. Note that for
$1\leq i\leq n-1,$ the vector $\overline{1}+\overline{2}+\cdots+\overline{i}$ has simple weight
$i$, but has weight
\begin{equation}\label{en4.1}
w(\overline{1}+\overline{2}+\cdots+\overline{i})=\left\{
                                                   \begin{array}{ll}
                                                     1, & \hbox{if $|[\overline{i}]\cap \Pi_1 |$ is even;} \\
                                                     2, & \hbox{if $|[\overline{i}]\cap \Pi_1 |$ is odd}
                                                   \end{array}
                                                 \right.
\end{equation}
by Lemma~\ref{l3.2}.
\bigskip

The following notation will be used in the sequel. For $V\subseteq F_2^n$ and $T\subseteq \{0, 1,
\ldots, n\},$
$$V_T:=\{u\in V~|~sw(u)\in T\},$$ and for shortness $V_{t_1, t_2, \ldots, t_i}:=V_{\{t_1, t_2, \ldots,
t_i\}}.$ Let $odd$ be the subset of $\{1, 2, \ldots, n\}$ consisting of odd integers.
\bigskip




\section{The case $|\Pi_1|$ is odd}\label{s5}

In this section we assume $|\Pi_1|$ to be odd and the counter part
is treated in the next section. Note that $\Delta=\{\overline{1},
\overline{2}, \ldots, \overline{n}\}$ is a basis of $U=F_2^n$ in
this case. 
From Lemma~\ref{l3.2}, for $1\leq i\leq n-1,$
$$
\widetilde{s}_{i}=\left\{
                                                   \begin{array}{ll}
                                                     \overline{1}+\overline{2}+\cdots+\overline{i}, & \hbox{if $|[\overline{i}]\cap \Pi_1|$ is even;} \\
                                                     \overline{i+1}+\overline{i+2}+\cdots+\overline{n}, & \hbox{if
$|[\overline{i}]\cap \Pi_1|$ is odd,}
                                                    \end{array}
                                              \right.
$$
and
$$
\widetilde{s}_{n}=\overline{1}+\overline{2}+\cdots+\overline{n}.
$$
 Hence, for $1\leq i\leq n-1,$
$$
sw(\widetilde{s}_{i})=\left\{
                                                   \begin{array}{ll}
                                                     i, & \hbox{if $|[\overline{i}]\cap \Pi_1|$ is even;} \\
                                                     n-i, & \hbox{if $|[\overline{i}]\cap \Pi_1|$ is odd,}
                                                    \end{array}
                                              \right.
$$
and $sw(\widetilde{s}_{n})=n.$ In other words, there exists a vector with simple weight $i$ and
weight $1$ if and only if $|[\overline{i}]\cap \Pi_1|$ is even, $i=n$ or $|[\overline{n-i}]\cap
\Pi_1|$ is odd. Set
$$I:=\{i\in [n]~|~ |[\overline{i}]\cap \Pi_1 |{\rm ~is~
even,~}i=n~{\rm or~}|[\overline{n-i}]\cap \Pi_1|{\rm ~ is~ odd}\},$$ where $[n]:=\{1, 2, \ldots,
n\}.$ Note that $w(U_i)\leq 2$ by Lemma~\ref{l3.2}, and
\begin{equation}\label{en5.0}
w(U_i)=1\qquad {\rm if~and~ only~ if~}\qquad i\in I
\end{equation}
 for $1\leq i\leq n$.

\begin{lem}\label{ln5.1} For $u\in F_2^n,$ 
we have
$$\mathbf{s_n}u=\left\{
                  \begin{array}{ll}
                    u, & \hbox{if $|\Delta(u)\cap \Pi_1|$ is even;} \\
                    u+\sum\limits_{\overline{i}\in\Pi_0}\overline{i}, & \hbox{else.}
                  \end{array}
                \right.
$$ In particular,
$$sw(\mathbf{s_n}u)=\left\{
                  \begin{array}{ll}
                    sw(u), & \hbox{if $|\Delta(u)\cap \Pi_1|$ is even;} \\
                    n-|\Pi_1|+2k-sw(u), & \hbox{else,}
                  \end{array}
                \right.$$
where $k=|\Pi_1\cap\Delta(u)|.$
\end{lem}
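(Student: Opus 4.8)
The plan is to compute $\mathbf{s_n}u$ directly from the action of $\mathbf{s_n}$ on the basis $\Delta = \Pi$ established in Corollary~\ref{c3.3}, and then track what this does to simple weight. Write $u = \sum_{\overline{i}\in\Delta(u)}\overline{i}$. From the formula for $\mathbf{s_n}\overline{i}$ derived in the proof of Corollary~\ref{c3.3}, namely $\mathbf{s_n}\overline{i}=\overline{i}$ for $\overline{i}\in\Pi_0$ and $\mathbf{s_n}\overline{i}=\overline{i}+\sum_{\overline{j}\in\Pi_0}\overline{j}$ for $\overline{i}\in\Pi_1$, linearity gives
$$
\mathbf{s_n}u=\sum_{\overline{i}\in\Delta(u)}\mathbf{s_n}\overline{i}
= u + \bigl|\Delta(u)\cap\Pi_1\bigr|\cdot\!\!\sum_{\overline{j}\in\Pi_0}\overline{j},
$$
where the scalar is read modulo $2$. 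This immediately yields the first displayed dichotomy: the correction term vanishes iff $|\Delta(u)\cap\Pi_1|$ is even, and otherwise equals $\sum_{\overline{i}\in\Pi_0}\overline{i}$.

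Next I would compute the simple weight in the nontrivial case. When $|\Delta(u)\cap\Pi_1|$ is odd we have $\mathbf{s_n}u = u + \sum_{\overline{i}\in\Pi_0}\overline{i}$, and since $\Pi_0$ is a subset of the basis $\Delta$, the support of $\sum_{\overline{i}\in\Pi_0}\overline{i}$ relative to $\Delta$ is exactly $\Pi_0$. Adding two vectors over $F_2$ takes the symmetric difference of their $\Delta$-supports, so
$$
\Delta(\mathbf{s_n}u)=\Delta(u)\,\triangle\,\Pi_0,
$$
hence $sw(\mathbf{s_n}u)=|\Delta(u)\triangle\Pi_0|=|\Delta(u)|+|\Pi_0|-2|\Delta(u)\cap\Pi_0|$. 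Writing $k=|\Pi_1\cap\Delta(u)|$ and using $|\Pi_0|=n-|\Pi_1|$ together with $|\Delta(u)\cap\Pi_0|=|\Delta(u)|-k=sw(u)-k$, this becomes
$$
sw(\mathbf{s_n}u)=sw(u)+n-|\Pi_1|-2\bigl(sw(u)-k\bigr)=n-|\Pi_1|+2k-sw(u),
$$
which is the claimed formula. The even case is trivial since then $\mathbf{s_n}u=u$.

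The only subtle point, and the step I would be most careful about, is the claim $\Delta(\sum_{\overline{i}\in\Pi_0}\overline{i})=\Pi_0$: this relies on $\Pi_0\subseteq\Delta=\Pi$, which holds precisely because we are in the case $|\Pi_1|$ odd (if $|\Pi_1|$ were even, $\Delta$ would omit one element of $\Pi$, and one would have to check that element is not in $\Pi_0$, or rewrite the sum). Since this section assumes $|\Pi_1|$ odd and $\Delta=\Pi$, the argument goes through cleanly. Everything else is bookkeeping with symmetric differences and the identity $|A\triangle B|=|A|+|B|-2|A\cap B|$ over $F_2$-supports.
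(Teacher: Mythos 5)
Your proof is correct and essentially matches the paper's: the paper checks the $s_n$-coordinate $\langle u,\widetilde{s}_n\rangle\equiv|\Delta(u)\cap\Pi_1|\pmod 2$ directly and invokes Lemma~\ref{l4.1}, whereas you expand $u$ over the basis $\Delta=\Pi$ and use linearity together with the action formula from the proof of Corollary~\ref{c3.3} (which already encodes Lemma~\ref{l4.1}), arriving at the same correction term $\sum_{\overline{i}\in\Pi_0}\overline{i}$. Your simple-weight computation via $\Delta(\mathbf{s_n}u)=\Delta(u)\,\triangle\,\Pi_0$ is the same counting the paper performs, and your care about $\Pi_0\subseteq\Delta$ in the odd case is exactly the point that makes it legitimate.
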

\begin{proof} If $|\Delta(u)\cap \Pi_1|$ is even then $<u, \widetilde{s}_n>=0$ and
$\mathbf{s_n}u=u$ by construction. If $|\Delta(u)\cap \Pi_1|$ is odd, then
\begin{eqnarray*}
\mathbf{s_n}u&=&u+\sum\limits_{k=1}^m\widetilde{s}_{j_k}\\
             &=&u+\sum\limits_{\overline{i}\in\Pi_0}\overline{i}
\end{eqnarray*}
by Lemma~\ref{l4.1}, and $sw(\mathbf{s_n}u)=|\Delta(u)\cap \Pi_1|+(|\Pi_0|-|\Delta(u)\cap \Pi_0|)=
n-|\Pi_1|+2k-sw(u).$
\end{proof}

 The following lemma follows from Corollary~\ref{c3.5} and $\Delta=\Pi.$

\begin{lem}\label{l2.1}
The nontrivial orbits of $F_2^n$ under $\mathbf{W}_{P}$ are $U_i$ for $1\leq i\leq
n.$~~~~~~~~~~~~~~ \hfill $\Box$
\end{lem}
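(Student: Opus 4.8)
The nontrivial orbits of $F_2^n$ under $\mathbf{W}_P$ are exactly the sets $U_i = \{u \in F_2^n \mid sw(u) = i\}$ for $1 \le i \le n$.

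The plan is to exploit the identification, from Corollary~\ref{c3.5}, that $\mathbf{W}_P$ acts on the simple basis $\Delta = \Pi = \{\overline{1}, \ldots, \overline{n}\}$ exactly as the full symmetric group $S_n$ permutes the $n$ basis vectors. First I would observe that since $\Delta$ is a basis of $F_2^n$ (here $|\Pi_1|$ is odd, so $\Delta = \Pi$ by Proposition~\ref{p3.4}), every vector $u \in F_2^n$ is uniquely $\sum_{\overline{i} \in \Delta(u)} \overline{i}$, and the simple weight $sw(u) = |\Delta(u)|$ is well-defined. A group element $G \in \mathbf{W}_P$ acting as a permutation $\pi \in S_n$ of $\Delta$ sends $u = \sum_{\overline{i} \in \Delta(u)} \overline{i}$ to $\sum_{\overline{i} \in \Delta(u)} \pi(\overline{i})$; by linearity and the fact that $\pi$ is a bijection on $\Delta$, this shows $\Delta(Gu) = \pi(\Delta(u))$, hence $sw(Gu) = |\pi(\Delta(u))| = |\Delta(u)| = sw(u)$. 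So simple weight is a $\mathbf{W}_P$-invariant, and each $U_i$ is a union of $\mathbf{W}_P$-orbits.

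Next I would show each $U_i$ with $1 \le i \le n$ is a single orbit (these are the nontrivial orbits; $U_0 = \{0\}$ is the trivial one). Given $u, v \in U_i$, both $\Delta(u)$ and $\Delta(v)$ are $i$-element subsets of the $n$-element set $\Delta$, so there is a permutation $\pi \in S_n$ of $\Delta$ with $\pi(\Delta(u)) = \Delta(v)$; since $\mathbf{W}_P$ realizes all of $S_n$ on $\Delta$ (Corollary~\ref{c3.5}), pick $G \in \mathbf{W}_P$ inducing $\pi$, and then $Gu = \sum_{\overline{i} \in \Delta(u)}\pi(\overline{i}) = \sum_{\overline{j} \in \Delta(v)} \overline{j} = v$ by the computation in the previous step. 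Hence $u$ and $v$ lie in the same orbit, so $U_i$ is exactly one orbit. Finally, $\{U_0, U_1, \ldots, U_n\}$ partitions $F_2^n$ by definition of simple weight, so these are all the orbits, and the nontrivial ones are precisely $U_1, \ldots, U_n$.

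There is no real obstacle here — the lemma is essentially a repackaging of Corollary~\ref{c3.5} together with the fact that $\Delta$ is a basis. The only point requiring a word of care is the linearity argument: one must note that because we work over $F_2$ and $\pi$ permutes a \emph{basis}, the image $\sum_{\overline{i}\in\Delta(u)}\pi(\overline{i})$ is again a sum of \emph{distinct} basis vectors with no cancellation, so its support in the simple basis is exactly $\pi(\Delta(u))$ and its simple weight is genuinely preserved (as opposed to possibly dropping due to collisions). Once that is noted, both inclusions are immediate and the proof is short.
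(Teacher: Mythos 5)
Your proof is correct and follows essentially the same route as the paper, which simply cites Corollary~\ref{c3.5} together with the fact that $\Delta=\Pi$ is a basis: every element of $\mathbf{W}_P$ is, in the simple basis, a permutation matrix (since each $\mathbf{s_i}$ permutes the basis $\Pi$ by Lemma~\ref{l4.2}), so simple weight is invariant, and transitivity of $S_n$ on $i$-subsets makes each $U_i$ a single orbit. Your added remark about no cancellation over $F_2$ is exactly the small point the paper leaves implicit.
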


The following theorem solves the flipping puzzle when  $3\leq |\Pi_1|\leq n-3.$

\begin{thm}\label{odd_1}
Suppose $3\leq |\Pi_1|\leq n-3.$ Then the nontrivial orbits of $F_2^n$ under $\mathbf{W}$ are
$U_{A_1}, U_{A_2}, U_{A_3}, U_{A_4},$ where $$A_i:=\{j\in[n]~|~j\equiv i,n+|\Pi_1|-i\pmod{4}\}.$$
In particular the number of orbits (including the trivial one) of $F_2^n$ under $\mathbf{W}$ is
$$|\mathcal{P}|=\left\{
      \begin{array}{ll}
        3, & \hbox{if $n$ is even;} \\
        4, & \hbox{else,}
      \end{array}
    \right.$$
and the maximum-orbit-weight $M(S)$ of $S$ is
$$M(S)=\left\{
         \begin{array}{ll}
           1, & \hbox{if $A_i\cap I\not=\emptyset$ for all $i$;} \\
           2, & \hbox{else.}
         \end{array}
       \right.
 $$
\end{thm}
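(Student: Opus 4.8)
The plan is to analyze the $\mathbf{W}$-orbit structure by combining the already-established description of the $\mathbf{W}_P$-orbits (the sets $U_i$, $1\le i\le n$, by Lemma~\ref{l2.1}, since $\Delta=\Pi$ here) with the single extra generator $\mathbf{s_n}$, whose action on simple weights is governed by Lemma~\ref{ln5.1}. Since $\mathbf{W}$ is generated by $\mathbf{W}_P$ together with $\mathbf{s_n}$, every $\mathbf{W}$-orbit is a union of the $U_i$'s, and two layers $U_i$, $U_j$ lie in the same $\mathbf{W}$-orbit precisely when one can pass from simple weight $i$ to simple weight $j$ by an alternating sequence of moves: $\mathbf{W}_P$-moves preserve simple weight, while $\mathbf{s_n}$ sends a vector $u$ with $|\Delta(u)\cap\Pi_1|=k$ odd to one of simple weight $n-|\Pi_1|+2k-sw(u)$. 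So first I would set $\ell:=|\Pi_1|$ and record the elementary congruence $n-\ell+2k-i\equiv n+\ell-i\pmod 4$ (valid since $2k-2\ell\equiv -2\ell+2\equiv\cdots$; more simply $2k\equiv 2\ell\pmod 4$ is false in general, so one must be careful — the correct observation is that $k$ ranges over all values making the parity of $|\Delta(u)\cap\Pi_1|$ odd, so $2k$ ranges over all of $2,6,10,\dots$ as well as $4,8,\dots$ depending, hence the reachable simple weights from layer $i$ are all $j$ with $j\equiv n-\ell-i\pmod 4$ realizable by choosing $k$ appropriately). This is the first point to get exactly right.

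The key technical step is therefore: show that within a $\mathbf{W}$-orbit the simple weights form exactly one of the residue classes $A_i=\{j\in[n]: j\equiv i,\ n+\ell-i\pmod 4\}$, and that all four such classes actually occur as distinct nonempty orbits. For the "reachability" direction I would argue that, given any $u\in U_i$ with $1\le i\le n$, because $3\le\ell\le n-3$ we have enough room both in $\Pi_1$ and in $\Pi_0$ to use $\mathbf{W}_P$ (acting as the full symmetric group $S_n$ on $\Pi$, by Corollary~\ref{c3.5}) to adjust $\Delta(u)$ to contain any prescribed number $k$ of elements of $\Pi_1$ with $k$ odd and $1\le k\le\min(\ell,i)$ or so, and then apply $\mathbf{s_n}$; iterating, one reaches every $j$ in the claimed class. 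The constraints $\ell\ge 3$ and $\ell\le n-3$ are exactly what guarantee that both $k=1$ and $k=3$ (hence both residues $2k\equiv 2$ and $2k\equiv 6\equiv 2$... — again the parity bookkeeping mod $4$ is the crux) are available, collapsing what might a priori be more classes down to the two-element residue structure of $A_i$. For the converse (no further collapse) I would verify that $\mathbf{s_n}$ and $\mathbf{W}_P$ both preserve the quantity "simple weight mod something"; concretely, from Lemma~\ref{ln5.1}, $sw(u)+sw(\mathbf{s_n}u)\equiv n-\ell\pmod 2$ is not enough, so I'd instead check the invariant is $\{sw \bmod 4\}\cup\{(n+\ell-sw)\bmod 4\}$ is preserved, i.e. $A_{sw(u)}=A_{sw(\mathbf{s_n}u)}$, which is a direct computation from the formula. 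Since $\{A_1,A_2,A_3,A_4\}$ partitions $[n]$ into four nonempty pieces (as $n\ge 6$), this yields $|\mathcal P|=4$ when $n$ is odd and, when $n$ is even, a coincidence $A_i=A_{i'}$ merges two of them, giving $|\mathcal P|=3$; I would check that the parity of $n$ is exactly what controls whether $i\equiv n+\ell-i\pmod 4$ can force $A_i=A_j$ for $i\ne j$.

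Finally, for the maximum-orbit-weight statement: by \eqref{en5.0}, $w(U_i)=1$ iff $i\in I$, and $w(U_i)\le 2$ always by Lemma~\ref{l3.2}; hence a $\mathbf{W}$-orbit $U_{A_i}$ has weight $1$ iff $A_i\cap I\ne\emptyset$ and weight $2$ otherwise, and $M(S)=\max_i w(U_{A_i})$, giving the stated dichotomy. The main obstacle I anticipate is the mod-$4$ bookkeeping in the reachability argument — precisely controlling which simple weights $j$ are attainable from a given $U_i$ by a single $\mathbf{s_n}$ move (which requires simultaneously controlling $sw(u)$ and the parity of $|\Delta(u)\cap\Pi_1|$, both adjustable via $S_n=\mathbf{W}_P$ but subject to the ceiling $|\Pi_1|$ and the floor coming from needing $|\Delta(u)\cap\Pi_0|$ nonneg), and then showing the orbit of transitions closes up to exactly the two residues defining $A_i$ rather than one or four; the hypothesis $3\le|\Pi_1|\le n-3$ is what I expect to invoke repeatedly to make this work.
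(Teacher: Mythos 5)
Your plan follows the paper's own route (Lemma~\ref{l2.1} for the $\mathbf{W}_P$-orbits $U_i$, Lemma~\ref{ln5.1} for how $\mathbf{s_n}$ moves the simple weight, then mod-$4$ bookkeeping), but the execution of the central mod-$4$ step is wrong in the proposal. You assert that ``$2k\equiv 2|\Pi_1|\pmod 4$ is false in general'' and tentatively conclude that the simple weights reachable from $U_i$ by one $\mathbf{s_n}$-move lie in the residue class of $n-|\Pi_1|-i$ modulo $4$. In this section $|\Pi_1|$ is odd, and Lemma~\ref{ln5.1} changes the simple weight only when $k=|\Delta(u)\cap\Pi_1|$ is odd; hence $2k\equiv 2\equiv 2|\Pi_1|\pmod 4$, so a single move sends layer $i$ to a layer $\equiv n+|\Pi_1|-i\pmod 4$ --- exactly the second residue in the definition of $A_i$, and consistent with the invariant $A_{sw(u)}=A_{sw(\mathbf{s_n}u)}$ that you state later. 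The residue $n-|\Pi_1|-i$ is off by $2$ and would contradict the theorem, so the ``parity bookkeeping'' you correctly identify as the crux is carried out incorrectly where you attempt it.

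The second, and more serious, issue is that the substantive half of the theorem is left undone. Invariance of $A_{sw(u)}$ under $\mathbf{s_n}$ together with Lemma~\ref{l2.1} only shows that every $\mathbf{W}$-orbit is contained in some $U_{A_i}$; one must also show each $U_{A_i}$ is a single orbit, i.e.\ that $U_i$ and $U_{i+4}$ merge. Since one $\mathbf{s_n}$-move always lands in the complementary residue $n+|\Pi_1|-i$, this requires composing two moves, with intersection numbers $k$ and $k+2$, and checking that both are realizable: the admissible $k$ are the odd integers with ${\rm max}\{1,i+|\Pi_1|-n\}\leq k\leq{\rm min}\{|\Pi_1|,i\}$ (your ``$1\le k\le\min(|\Pi_1|,i)$ or so'' misses the lower bound coming from $|\Delta(u)\cap\Pi_0|\leq|\Pi_0|$), and after the first move the shifted pair must satisfy the analogous inequalities; the hypotheses $|\Pi_1|\leq n-3$ and $3\leq|\Pi_1|$ are used precisely to guarantee these two feasibility conditions, respectively (the paper's (\ref{en5.2}) and (\ref{en5.3})). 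Your proposal replaces this with ``iterating, one reaches every $j$ in the claimed class'' and explicitly defers it as ``the main obstacle I anticipate,'' so the step on which the theorem and its hypotheses actually turn is missing. The concluding deductions --- the count of $|\mathcal{P}|$ from coincidences among the $A_i$ depending on the parity of $n$, and $M(S)$ via (\ref{en5.0}) --- are fine once the orbit description is in place.
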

\begin{proof} Fix an integer $1\leq i\leq n.$
By Lemma~\ref{l2.1}, $U_i$ is contained in an orbit of $F_2^n$ under $\mathbf{W}.$ To put two
orbits under $\mathbf{W}_P$ to an orbit under $\mathbf{W}$ is only by the action of $\mathbf{s_n}.$
Hence $U_i$ and $U_{n-|\Pi_1|+2k-i}$ are in the same orbit by Lemma~\ref{ln5.1}, where $k$ runs
through possible odd integers $|\Pi_1\cap\Delta(u)|$ for $u\in U_i.$ In fact $k$ is any odd number
that satisfies $k\leq |\Pi_1|$ and  $0\leq i-k\leq |\Pi_0|;$ equivalently
\begin{equation}\label{en5.2}
{\rm max}\{1, i+|\Pi_1|-n\}\leq k\leq {\rm min}\{|\Pi_1|, i\}.
\end{equation}
Such an odd integer $k$ exists for any $1\leq i\leq n,$ and note that $$ n-|\Pi_1|+2k-i\equiv
n+|\Pi_1|-i\pmod 4$$ since $k$ and $|\Pi_1|$  are odd integers. To see the orbits as stated in the
theorem, it remains to show that $U_i$ and $U_{i+4}$ are in the same orbit under $\mathbf{W}$ for
$1\leq i\leq n-4.$ Set $k$ to be the least odd integer greater than or equal to ${\rm max}\{1,
i+|\Pi_1|-n+2\}.$
 For this $k$, (\ref{en5.2}) holds and then $U_i$ and
$U_{n-|\Pi_1|+2k-i}$ are in the same orbit. Here we use the assumption $|\Pi_1|\leq n-3$ to
guarantee the existence of  such $k$. Note that if we use $(n-|\Pi_1|+2k-i, k+2)$ to replace $(i,
k)$ in (\ref{en5.2}), we have
\begin{equation}\label{en5.3}{\rm max}\{1, 2k-i\}\leq k+2\leq {\rm min}\{|\Pi_1|, n-|\Pi_1|+2k-i\}.
\end{equation}
The above $k$ and the assumption $3\leq |\Pi_1|$ guarantee the equation (\ref{en5.3}). Since
$n-|\Pi_1|+2(k+2)-(n-|\Pi_1|+2k-i)=i+4,$ we have
 $U_{n-|\Pi_1|+2k-i}$ and $U_{i+4}$ in the same orbit. Putting these together, $U_i$ and $U_{i+4}$
are in the same orbit. The remaining statements of the theorem are obtained from the orbits
description immediately and by using (\ref{en5.0}).
\end{proof}

The following theorem does the remaining cases.

\begin{thm}\label{p5.4}
Suppose $|\Pi_1|=1,$ $n-2$ or $n-1.$ Then the nontrivial orbits of $F_2^n$ under $\mathbf{W}$ are
$$\left\{
    \begin{array}{ll}
      U_{i,n+1-i}, & \hbox{if $|\Pi_1|=1$;} \\
      U_{odd}, U_{2j}, & \hbox{if $|\Pi_1|=n-2$;}\\
    U_{2i-1, 2i}, & \hbox{if $|\Pi_1|=n-1$}\\
    \end{array}
  \right.
$$
for $1\leq i\leq \lceil n/2 \rceil$ and $1\leq j \leq (n-1)/2.$ In particular the number of orbits
(including the trivial one) of $F_2^n$ under $\mathbf{W}$ is
$$|\mathcal{P}|=\left\{
      \begin{array}{ll}
        \lceil (n+2)/2\rceil, & \hbox{if $|\Pi_1|=1$;} \\
       (n+3)/2, & \hbox{if $|\Pi_1|=n-2$;}\\
  (n+2)/2,               & \hbox{if $|\Pi_1|=n-1$,}
      \end{array}
    \right.$$
and the maximum-orbit-weight $M(S)$ of $S$ is at most $2$. Moreover  $M(S)=1$  if and only if
$$\left\{
    \begin{array}{ll}
      \{i, n+1-i\} \cap I\not=\emptyset\quad \hbox{\rm for all } 1\leq i\leq \lceil n/2\rceil, & \hbox{if $|\Pi_1|=1$;} \\
      odd\cap I\not=\emptyset~\hbox{\rm and}~U_{2j}\cap I\not=\emptyset\quad \hbox{\rm for all } 1\leq j\leq \lfloor n/2\rfloor, & \hbox {if $|\Pi_1|=n-2$;}\\
      \{2i-1, 2i\}\cap I\not=\emptyset\quad\hbox{\rm for all } 1\leq i\leq \lceil n/2\rceil, & \hbox {if $|\Pi_1|=n-1$.}\\
    \end{array}
  \right.
$$
\end{thm}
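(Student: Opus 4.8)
The plan is to mimic the structure of the proof of Theorem~\ref{odd_1}, but now working with the three extreme values of $|\Pi_1|$ where the ``middle'' orbits do not all collapse into four classes. As in that proof, the starting point is Lemma~\ref{l2.1}: under $\mathbf{W}_P$ the nontrivial orbits are exactly the sets $U_i$ for $1\le i\le n$, and the only way two such $\mathbf{W}_P$-orbits can merge under the full flipping group $\mathbf{W}$ is via the action of $\mathbf{s_n}$. By Lemma~\ref{ln5.1}, $\mathbf{s_n}$ fixes a vector $u$ with $|\Delta(u)\cap\Pi_1|$ even and sends $u$ with $|\Delta(u)\cap\Pi_1|=k$ odd to a vector of simple weight $n-|\Pi_1|+2k-sw(u)$. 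So the combinatorial heart of the matter is: for which pairs $(i,j)$ does there exist $u\in U_i$ with an admissible odd $k$ such that $j=n-|\Pi_1|+2k-i$? The admissibility constraint is exactly \eqref{en5.2}: $\max\{1,\,i+|\Pi_1|-n\}\le k\le\min\{|\Pi_1|,\,i\}$.

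First I would treat $|\Pi_1|=n-1$. Here $|\Pi_0|=1$, so \eqref{en5.2} forces $k=\min\{n-1,i\}$ when $i\le n-1$ and leaves $k\in\{n-1\}$ when $i=n$; in all cases $k$ is essentially pinned down (and $k$ must be odd, which happens precisely when $i$ is odd, or $i=n$ with $n$ odd, etc.). A short case analysis shows $\mathbf{s_n}$ swaps $U_{2i-1}$ with $U_{2i}$ and fixes everything else setwise, giving the orbits $U_{2i-1,2i}$. Next, $|\Pi_1|=1$: now $|\Pi_0|=n-1$, \eqref{en5.2} gives $\max\{1,i+1-n\}\le k\le\min\{1,i\}$, so $k=1$ (odd, always admissible for $1\le i\le n$), and $n-|\Pi_1|+2k-i=n+1-i$; hence $\mathbf{s_n}$ identifies $U_i$ with $U_{n+1-i}$ and nothing further, yielding $U_{i,n+1-i}$. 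Finally $|\Pi_1|=n-2$, the genuinely interesting case: $|\Pi_0|=2$, \eqref{en5.2} reads $\max\{1,i-2\}\le k\le\min\{n-2,i\}$, so for $i$ in the bulk there are two admissible values $k\in\{i-2,i-1,i\}\cap(\text{odd})$ — typically two choices of odd $k$, differing by $2$. Taking $k$ and $k+2$ and chasing $j=n-2+2k-i$ shows $U_i$, $U_{i+4}$-type links appear, but now because $|\Pi_1|=n-2$ is not bounded away from $n-3$ uniformly the collapse is only ``mod $2$'' on the appropriate residues: one gets that all $U_i$ with $i$ odd merge into a single orbit $U_{odd}$, while the even-weight sets $U_{2j}$ do not merge with each other (the parity of $k$ being forced odd blocks the $i\mapsto i+4$ step among even indices), each $U_{2j}$ surviving as its own orbit. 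I would verify the non-merging half by checking that $\mathbf{s_n}$ cannot change $sw$ among even-simple-weight vectors except by the fixed rule, and that no composition of $\mathbf{W}_P$ and $\mathbf{s_n}$ moves between distinct even weights.

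With the orbit lists in hand, $|\mathcal{P}|$ is just counting: for $|\Pi_1|=1$ the pairs $\{i,n+1-i\}$ number $\lceil n/2\rceil$ plus the trivial orbit, i.e.\ $\lceil(n+2)/2\rceil$; for $|\Pi_1|=n-1$ similarly $(n+2)/2$ (note $n$ must be odd here since $|\Pi_1|=n-1$ is forced odd, so $\lceil n/2\rceil=(n+1)/2$ and adding the trivial orbit and adjusting gives $(n+2)/2$); for $|\Pi_1|=n-2$ one orbit $U_{odd}$, plus $\lfloor n/2\rfloor$ orbits $U_{2j}$, plus trivial, giving $(n+3)/2$ after using the parity of $n$. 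For the maximum-orbit-weight statement: $w(U_i)\le 2$ always (Lemma~\ref{l3.2}), and by \eqref{en5.0}, $w(U_i)=1$ iff $i\in I$; an orbit has weight $1$ iff it contains some $U_i$ with $i\in I$, i.e.\ iff its index set meets $I$. Translating ``every orbit meets $I$'' into the three displayed conditions is then immediate from the orbit descriptions, with the only subtlety being that for $|\Pi_1|=n-2$ the orbit $U_{2j}$ meeting $I$ must be phrased as $U_{2j}\cap I\ne\emptyset$ since the index $2j$ alone need not lie in $I$ — here $U_{2j}\cap I$ should be read as the vectors of simple weight $2j$ having weight $1$.

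I expect the main obstacle to be the $|\Pi_1|=n-2$ case: one must argue carefully that the odd-parity restriction on $k$ genuinely prevents the even-simple-weight sets from amalgamating, so that the $i\mapsto i+4$ argument of Theorem~\ref{odd_1} does \emph{not} go through for them, while it \emph{does} collapse all odd indices. This requires checking the boundary inequalities in \eqref{en5.2} and its shifted analogue \eqref{en5.3} fail in exactly the right way when $|\Pi_1|=n-2$ rather than $|\Pi_1|\le n-3$ — essentially a careful bookkeeping of which odd $k$ are admissible near the endpoints $i\approx 1$ and $i\approx n$. Everything else is a routine but slightly tedious translation between simple weights, the action of $\mathbf{s_n}$ from Lemma~\ref{ln5.1}, and the index sets $I$.
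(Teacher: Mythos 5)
Your proposal follows exactly the paper's route: Lemma~\ref{l2.1} plus Lemma~\ref{ln5.1} with the admissibility window (\ref{en5.2}), specialized to the three cases $|\Pi_1|=1, n-2, n-1$, and your case analyses yield the same merging pattern the paper records ($k=1$ always; $k\in\{i-2,i\}$ for odd $i$ and $k=i-1$ for even $i$ when $|\Pi_1|=n-2$, so odd weights chain into $U_{odd}$ while each even $U_{2j}$ is fixed setwise by $\mathbf{s_n}$; $k=i$ or $i-1$ when $|\Pi_1|=n-1$, pairing $U_{2i-1}$ with $U_{2i}$). Two small slips to correct in the writeup, neither affecting the argument: in the $|\Pi_1|=n-2$ case the target simple weight is $n-|\Pi_1|+2k-i=2+2k-i$, not $n-2+2k-i$, and when $|\Pi_1|=n-1$ is odd the order $n$ is even (so $\lceil n/2\rceil=n/2$ and the count $(n+2)/2$ falls out directly), not odd as you claimed.
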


\begin{proof}
As the proof in Theorem~\ref{odd_1}, $U_i$ and $U_{n-|\Pi_1|+2k-i}$ are in the same orbit under
$\mathbf{W},$ where $k$ needs to satisfy (\ref{en5.2}). In the case $|\Pi_1|=1$, $k=1$ is the only
possible choice and hence $U_{n+1-i}$ is the only orbit under $\mathbf{W}_{P}$  been put together
with $U_i$ to become an orbit under $\mathbf{W}.$ In the case $|\Pi_1|=n-2,$ we have $k=i-2$ or $i$
if $i$ is odd; $k=i-1$ if $i$ is even. In the case $|\Pi_1|=n-1,$ we have $k=i$ if $i$ is odd;
$k=i-1$ if $i$ is even. In each of the remaining the proof follows similarly.
\end{proof}

\begin{exam}
Let $S$ be an odd cycle of length $n$, i.e. $n$ is odd,  $m=2$, $j_1=1$ and $j_2=n-1$. Then
$\Pi_0=\{\overline{1}, \overline{n}\}$ and $\Pi_1=\{\overline{2}, \overline{3},
\ldots,\overline{n-1}\}.$ Note that $|\Pi_1|=n-2$ is odd, and $I=\{1, 3, \ldots, n\}.$ Hence
Theorem~\ref{p5.4} applies. We have $$\mathcal{P}=\{U_{odd}, U_0, U_2, U_4, \ldots, U_{n-1}\}.$$ In
particular, $|\mathcal{P}|=(n+3)/2,$ and $M(S)=2.$
\end{exam}

\section{The case $|\Pi_1|$ is even}\label{s6}

In this section we assume $|\Pi_1|$ to be even.
 Recall that in this case $\Delta=\Pi\cup\{\overline{n+1}\}-\{\overline{n}\}$ and $\Delta-\{\overline{n+1}\}$ are bases of $F_2^n$ and $U$ respectively. Recall that
\begin{align}\label{e5.1}
\overline{1}+\overline{2}+\cdots+\overline{n}=0
\end{align}
Let $\overline{U}:=F_2^n-U,$ and note that
$\overline{U}=\overline{n+1}+U,$
$\overline{U}_1=\{\overline{n+1}\}$ and $U_{n}=\emptyset.$
From Lemma~\ref{l3.2}, for $1\leq i\leq n-1,$
$$
\widetilde{s}_{i}=\left\{
                                                   \begin{array}{ll}
                                                     \overline{1}+\overline{2}+\cdots+\overline{i}\in U, & \hbox{if $|[\overline{i}]\cap\Pi_1|$ is even;} \\
                                                     \overline{1}+\overline{2}+\cdots+\overline{i}+\overline{n+1}\in\overline{U}, & \hbox{if $|[\overline{i}]\cap\Pi_1|$ is odd,}
                                                    \end{array}
                                              \right.
$$
and
$$
\widetilde{s}_{n}=\overline{n+1}\in\overline{U}.
$$
Moreover, for $1\leq i\leq n-1,$
$$
sw(\widetilde{s}_{i})=\left\{
                                                   \begin{array}{ll}
                                                     i, & \hbox{if $|[\overline{i}]\cap \Pi_1|$ is even;} \\
                                                     i+1, & \hbox{if $|[\overline{i}]\cap \Pi_1|$ is odd,}\\
                                                    \end{array}
                                              \right.
$$
and $sw(\widetilde{s}_{n})=1.$ In other words, there exists a vector in $U$ with simple weight $i$
and weight $1$ if and only if $|[\overline{i}]\cap \Pi_1|$ is even; there exists a  vector in
$\overline{U}$ with simple weight $i$ and weight $1$ if and only if $|[\overline{i-1}]\cap \Pi_1|$
is odd or $i=1.$
Set
$$
I=\{i\in[n-1]~|~ |[\overline{i}]\cap\Pi_1|{\rm~is~even}\}
$$
and
$$
J=\{i\in[n]~|~ |[\overline{i-1}]\cap\Pi_1|{\rm~is~odd~or~}i=1\}.
$$
Note that $w(U_i), w(\overline{U}_j)\leq 2$, and
\begin{equation}
\begin{split}
w(U_i)=1\qquad {\rm if~and~ only~ if~}\qquad i\in I;\\
w(\overline{U}_j)=1\qquad {\rm if~and~ only~ if~}\qquad j\in J
\end{split}
\end{equation}
 for $1\leq i\leq n-1$, $1\leq j\leq n.$

\begin{lem}\label{ln6.1}
For $u\in F_2^n$,  let $k=|\Pi_1\cap\Delta(u)|.$ Then the following (i),(ii) hold
\begin{enumerate}
\item[(i)] For $u\in U,$ we have
$$\mathbf{s_n}u=\left\{
                  \begin{array}{ll}
                    u, & \hbox{if $|\Delta(u)\cap \Pi_1|$ is even;} \\
                    u+\sum\limits_{\overline{i}\in\Pi_0}\overline{i}, & \hbox{else.}
                  \end{array}
                \right.
$$ In particular, the simple weight $sw(\mathbf{s_n}u)$ of $\mathbf{s_n}u$ is
$$\left\{
                  \begin{array}{ll}
                    sw(u), & \hbox{if $|\Delta(u)\cap \Pi_1|$ is even;} \\
                    n-|\Pi_1|+2k-sw(u), & \hbox{if  $|\Delta(u)\cap \Pi_1|$ is odd and $\overline{n}\in
\Pi_1;$}\\
                    sw(u)+|\Pi_1|-2k, & \hbox{else.}\\
                  \end{array}
                \right.$$
\item[(ii)] For $u\in\overline{U},$ we have
$$\mathbf{s_n}u=\left\{
                  \begin{array}{ll}
                    u, & \hbox{if $|\Delta(u)\cap \Pi_1|$ is odd;} \\
                    u+\sum\limits_{\overline{i}\in\Pi_0}\overline{i}, & \hbox{else.}
                  \end{array}
                \right.
$$ In particular, the simple weight $sw(\mathbf{s_n}u)$ of $\mathbf{s_n}u$ is
$$\left\{
                  \begin{array}{ll}
                    sw(u), & \hbox{if $|\Delta(u)\cap \Pi_1|$ is odd;} \\
                    n-|\Pi_1|+2k+2-sw(u), & \hbox{if  $|\Delta(u)\cap \Pi_1|$ is even and $\overline{n}\in
\Pi_1;$}\\
                    sw(u)+|\Pi_1|-2k, & \hbox{else.}\\
                  \end{array}
                \right.$$
\end{enumerate}
\end{lem}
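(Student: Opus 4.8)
The plan is to compute $\mathbf{s_n}u$ directly from the definition of the flipping move, exactly paralleling the proof of Lemma~\ref{ln5.1}, and then to translate the resulting vector identity into a statement about simple weights by bookkeeping how $\Delta(u)$ changes. First I would recall that by construction $\mathbf{s_n}u = u$ whenever $\langle u,\widetilde{s}_n\rangle = 0$ and $\mathbf{s_n}u = u + \sum_{k=1}^m \widetilde{s}_{j_k} = u + \sum_{\overline{i}\in\Pi_0}\overline{i}$ otherwise, the last equality being Lemma~\ref{l4.1}. The condition $\langle u,\widetilde{s}_n\rangle = 0$ must be re-expressed in terms of $\Delta(u)$: writing $u$ in the simple basis, each $\overline{i}\in\Pi_0$ contributes $0$ to the $s_n$-coordinate and each $\overline{i}\in\Pi_1$ contributes $1$ (note $\overline{n+1}=\widetilde{s}_n\in\Pi_1$ in the relevant bookkeeping, or is handled separately). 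For $u\in U$ this gives $\langle u,\widetilde{s}_n\rangle = |\Delta(u)\cap\Pi_1| \bmod 2$; for $u\in\overline{U}$ one has the extra $\overline{n+1}$ term, so the parity flips and the move acts trivially precisely when $|\Delta(u)\cap\Pi_1|$ is odd. This establishes the first line of each of (i) and (ii) and the ``if'' branches of the simple-weight formulas.

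The substantive part is the simple-weight computation in the nontrivial case $\mathbf{s_n}u = u + \sum_{\overline{i}\in\Pi_0}\overline{i}$. Here I would split on whether $\overline{n}\in\Pi_1$, because this governs whether $\overline{n}$ lies in $\Delta$ (recall $\Delta = \Pi\cup\{\overline{n+1}\}-\{\overline{n}\}$, so when $|\Pi_1|$ is even the vector $\overline{n}$ is the one removed, and the relation $\overline{1}+\cdots+\overline{n}=0$ from~(\ref{e5.1}) must be used to rewrite $\sum_{\overline{i}\in\Pi_0}\overline{i}$ purely in terms of $\Delta$). Concretely: if $\overline{n}\notin\Pi_1$, i.e. $\overline{n}\in\Pi_0$, then $\sum_{\overline{i}\in\Pi_0}\overline{i}$ already involves $\overline{n}$, which is not in $\Delta$; using $\overline{n} = \overline{1}+\cdots+\overline{n-1}$ one rewrites the sum over $\Pi_0$ as a sum over $\Delta$-elements, and the net effect on $\Delta(u)$ is symmetric difference with $(\Pi_0\setminus\{\overline{n}\})\,\triangle\,\{\overline{1},\dots,\overline{n-1}\}$, which one checks equals $\{\overline{1},\dots,\overline{n-1}\}\setminus\Pi_0 = \Pi_1\setminus\{\overline{n}\}$ (as a subset of $\Delta$, all of $\Pi_1$ since $\overline{n}\notin\Pi_1$). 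Then $sw(\mathbf{s_n}u) = |\Delta(u)\,\triangle\,\Pi_1| = sw(u) + |\Pi_1| - 2|\Delta(u)\cap\Pi_1| = sw(u) + |\Pi_1| - 2k$, giving the ``else'' branch. If instead $\overline{n}\in\Pi_1$, then $\overline{n}\notin\Pi_0$, so $\sum_{\overline{i}\in\Pi_0}\overline{i}$ is already a sum of $\Delta$-elements and the symmetric-difference count is $|\Delta(u)\,\triangle\,\Pi_0| = sw(u) + |\Pi_0| - 2|\Delta(u)\cap\Pi_0|$; substituting $|\Pi_0| = n - |\Pi_1|$ (for case (i), $u\in U$) or the appropriate $n-|\Pi_1|$ count adjusted by the $\overline{n+1}$ bookkeeping (for case (ii), $u\in\overline{U}$, where $\Delta(u)$ contains $\overline{n+1}$ and $|\Delta(u)\cap\Pi_0| = sw(u) - 1 - k$ rather than $sw(u)-k$), and using $|\Delta(u)\cap\Pi_1| = k$, yields $n - |\Pi_1| + 2k - sw(u)$ for (i) and $n - |\Pi_1| + 2k + 2 - sw(u)$ for (ii).

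The main obstacle I anticipate is keeping the several parity- and membership-distinctions straight simultaneously: whether $u\in U$ or $u\in\overline{U}$, whether $|\Delta(u)\cap\Pi_1|$ is even or odd, and whether $\overline{n}\in\Pi_0$ or $\overline{n}\in\Pi_1$. In particular, the $+2$ discrepancy between the second branches of (i) and (ii) comes entirely from the presence of $\overline{n+1}$ in $\Delta(u)$ when $u\in\overline{U}$ — this term is never in $\Pi_0$ nor affected by adding $\sum_{\overline{i}\in\Pi_0}\overline{i}$, so it persists, shifting every intersection count with $\Pi_0$ by one and hence the final answer by two; I would state this explicitly to justify the asymmetry. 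One should also double-check the edge interaction with the relation $\overline{n} = \overline{1}+\cdots+\overline{n-1}$ in the subcase $\overline{n}\in\Pi_0$, since it is the only place a genuine rewriting (rather than a direct symmetric-difference count) is needed, and verify that the even/odd parity of $|\Delta(u)\cap\Pi_1|$ is preserved under this rewriting so that the branch conditions remain consistent. Once these bookkeeping points are nailed down, each of the six formula-branches is a one-line count, so the proof is short.
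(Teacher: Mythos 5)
Your proposal is correct and follows essentially the same route as the paper, whose proof of this lemma is only a sketch (``similar to Lemma~\ref{ln5.1}, using (\ref{e5.1}) to adjust the simple weight''): you compute $\mathbf{s_n}u$ via Lemma~\ref{l4.1}, split on $\overline{n}\in\Pi_0$ versus $\overline{n}\in\Pi_1$, use $\overline{1}+\cdots+\overline{n}=0$ to rewrite $\sum_{\overline{i}\in\Pi_0}\overline{i}$ in the basis $\Delta$, and count symmetric differences, correctly locating the $+2$ in (ii) in the persistent $\overline{n+1}$ term. The only slip is the aside that $\overline{n+1}\in\Pi_1$ ``in the relevant bookkeeping'' (by definition $\overline{n+1}\notin\Pi$, hence not in $\Pi_1$), but since you handle it separately the parity computation and all six branches come out right.
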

\begin{proof}
The proof is similar to the proof of Lemma~\ref{ln5.1}, except that at this time since the choice
of simple basis $\Delta$ is different, the action of $\mathbf{s_n}$ on a vector is a little
different, and we need to use (\ref{e5.1}) to adjust the simple weight of a vector.
\end{proof}

By Corollary~\ref{c3.3} the orbits of $F_2^n$ under $\mathbf{W}$ (resp. under  $\mathbf{W}_{P}$)
are divided into two parts, one in
$U$ and the other in $\overline{U}.$ 

\begin{lem}\label{l6.1}
The nontrivial orbits of $F_2^n$ under $\mathbf{W}_{P}$ are $\overline{U}_1$,
$\overline{U}_{i+1,n+1-i}$ and $U_{i,n-i}$ for $1\leq i\leq \lfloor n/2\rfloor$ \hfill $\Box$
\end{lem}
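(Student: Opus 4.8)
The plan is to leverage Corollary~\ref{c3.5}, which identifies $\mathbf{W}_{P}$ with the symmetric group $S_n$ acting by permutations on $\Pi=\{\overline{1},\dots,\overline{n}\}$, together with the relation $\overline{1}+\cdots+\overline{n}=0$ from (\ref{e5.1}). First I would record that the orbit decomposition of $F_2^n$ under $\mathbf{W}_{P}$ refines the splitting $F_2^n=U\cup\overline{U}$: indeed $U$ is $\mathbf{W}_{P}$-invariant by Corollary~\ref{c3.3}, and since every element of $\mathbf{W}_{P}$ fixes $\widetilde{s}_n=\overline{n+1}$ we get $G\overline{U}=G(\overline{n+1}+U)=\overline{n+1}+U=\overline{U}$ for all $G\in\mathbf{W}_{P}$. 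So it suffices to describe the orbits lying in $U$ and those lying in $\overline{U}$ separately.

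For the orbits in $U$, the key observation is that each nonzero $u\in U$ is represented as a sum of a subset of $\Pi$ in exactly two ways, namely $\Delta(u)$ and $\Pi\setminus\Delta(u)$: since $\Delta-\{\overline{n+1}\}=\{\overline{1},\dots,\overline{n-1}\}$ is a basis of $U$, a subset $T\subseteq\Pi$ with $\sum_{\overline{i}\in T}\overline{i}=u$ either avoids $\overline{n}$, forcing $T=\Delta(u)$, or contains $\overline{n}$, forcing $\Pi\setminus T=\Delta(u)$ via (\ref{e5.1}). Consequently the $S_n$-orbit of $u$ is exactly the set of $v\in U$ whose two representations have sizes lying in $\{sw(u),\,n-sw(u)\}$, i.e. $U_{sw(u),\,n-sw(u)}$. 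Since $sw$ takes every value in $\{1,\dots,n-1\}$ on $U\setminus\{0\}$ (as $U_0=\{0\}$ and $U_n=\emptyset$) and $U_{i,n-i}=U_{n-i,i}$, the distinct nontrivial orbits in $U$ are precisely $U_{i,n-i}$ for $1\le i\le\lfloor n/2\rfloor$.

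For the orbits in $\overline{U}$, write $u'=\overline{n+1}+u$ with $u\in U$; then $\Delta(u')=\{\overline{n+1}\}\sqcup\Delta(u)$, so $sw(u')=sw(u)+1$, and because $\mathbf{W}_{P}$ fixes $\overline{n+1}$ the orbit of $u'$ equals $\overline{n+1}$ plus the orbit of $u$. If $u=0$ this orbit is $\{\overline{n+1}\}=\overline{U}_1$; if $u\ne 0$ it is $\overline{n+1}+U_{sw(u),\,n-sw(u)}=\overline{U}_{sw(u)+1,\,n-sw(u)+1}$, and letting $i=sw(u)$ range over $\{1,\dots,n-1\}$ and collapsing the symmetry $i\leftrightarrow n-i$ yields exactly $\overline{U}_{i+1,\,n+1-i}$ for $1\le i\le\lfloor n/2\rfloor$. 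I would close by checking non-emptiness of each listed set (it contains $\overline{1}+\cdots+\overline{i}$, respectively $\overline{n+1}+\overline{1}+\cdots+\overline{i}$) and their pairwise distinctness, both immediate from the labels and from $U\cap\overline{U}=\emptyset$.

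I do not anticipate a genuine obstacle; the only delicate point is the bookkeeping caused by the simple basis $\Delta$ differing from $\Pi$ (it replaces $\overline{n}$ by $\overline{n+1}=\widetilde{s}_n$), so that $sw$ does not literally count the $\Pi$-support of a vector. The "two representations" observation in the second paragraph is exactly what reconciles this, and with it in hand the argument is a routine transcription of the orbit structure of $S_n$ acting on subsets of an $n$-element set.
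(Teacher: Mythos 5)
Your proposal is correct and follows essentially the same route as the paper: it uses the identification of $\mathbf{W}_{P}$ with $S_n$ acting on $\Pi$ (Corollary~\ref{c3.5}, with $\widetilde{s}_n$ fixed) together with the relation $\overline{1}+\cdots+\overline{n}=0$ to pair $U_i$ with $U_{n-i}$ and $\overline{U}_{i+1}$ with $\overline{U}_{n+1-i}$. Your ``exactly two representations as subsets of $\Pi$'' observation just makes explicit the paper's closing remark that there is no further merging of these sets, so it is a more detailed write-up of the same argument.
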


\begin{proof}
By construction, $\overline{U}_1=\{\widetilde{s}_n\}$ is an orbit under $\mathbf{W}_{P}.$ By
Corollary~\ref{c3.3} and Corollary~\ref{c3.5}, $U_i$ is contained in an orbit of $F_2^n$ under
$\mathbf{W}_{P}$ and $\overline{U}_i$ is contained in another one for $1\le i\leq n-1$. The
equation (\ref{e5.1}) and our choice of $\Delta$ imply that $U_i$ and $U_{n-i}$ are in the same
orbit of $F_2^n$ under $\mathbf{W}_{P}$; $\overline{U}_{i+1}$ and $\overline{U}_{n+1-i}$ are in
another one for $1\leq i\leq n-1$. Since no other ways to put these sets together, we have the
lemma.
\end{proof}



\begin{thm}\label{even_1}
Suppose $4\leq |\Pi_1|\leq n-3.$  Then the nontrivial orbits of $F_2^n$ under $\mathbf{W}$ are
$U_{B_1}, U_{B_2}, U_{B_3}, U_{B_4},
\overline{U}_{C_1},\overline{U}_{C_2},\overline{U}_{C_3},\overline{U}_{C_4},$ where
$$B_i=\{j\in[n-1]~|~j\equiv i, i+|\Pi_1|-2, n-i, n-i+|\Pi_1|-2\pmod{4}\}$$ and $$C_i=\{j\in[n]~|~j\equiv i, i+|\Pi_1|, n+2-i, n+2-i+|\Pi_1| \pmod{4}\}.$$ In
particular the number of orbits (including the trivial one) of $F_2^n$ under $\mathbf{W}$ is
$$|\mathcal{P}|=\left\{
      \begin{array}{ll}
        6, & \hbox{if $n$ is even;} \\
        4, & \hbox{else,}
      \end{array}
    \right.$$
and the maximum-orbit weight $M(S)$ of $S$ is
$$M(S)=\left\{
         \begin{array}{ll}
           1, & \hbox{if $B_i\cap I\not=\emptyset$ and $C_i\cap J\not=\emptyset$ for all $i$;} \\
           2, & \hbox{else.}
         \end{array}
       \right.
 $$
\end{thm}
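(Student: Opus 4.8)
The plan is to mimic the proof of Theorem~\ref{odd_1}, but now tracking the two invariant pieces $U$ and $\overline U$ separately, since by Corollary~\ref{c3.3} no move can cross between them. Fix $i$ with $1\le i\le n-1$. By Lemma~\ref{l6.1}, $U_i$ (together with $U_{n-i}$) sits inside a single $\mathbf W_P$-orbit, and likewise $\overline U_{i+1}$ (together with $\overline U_{n+1-i}$) sits inside a single one; the only way to merge $\mathbf W_P$-orbits into $\mathbf W$-orbits is by applying $\mathbf s_n$, whose effect on simple weights is recorded in Lemma~\ref{ln6.1}. So the whole argument reduces to: (a) determine which congruence classes mod $4$ the map $sw(u)\mapsto sw(\mathbf s_n u)$ identifies, and (b) check that within the range $4\le|\Pi_1|\le n-3$ the parameter $k=|\Pi_1\cap\Delta(u)|$ can be chosen freely enough to realize all the needed identifications, including the ``$+4$ shift'' that collapses everything into four classes.

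First I would settle (a). Since $|\Pi_1|$ is even, we may assume $\overline n\in\Pi_1$ (the case $\overline n\in\Pi_0$ forces $|\Pi_1|$ even automatically but changes which branch of Lemma~\ref{ln6.1} fires; I would note that when $\overline n\notin\Pi_1$ the ``else'' branch only shifts $sw$ by the even number $|\Pi_1|-2k$, hence preserves the class mod... actually mod $2$ only, so I must be careful and treat $\overline n\in\Pi_1$ as the governing case, exactly as Lemma~\ref{ln5.1} did in the odd setting — here the relevant sub-case is the one giving $n-|\Pi_1|+2k-sw(u)$ for $U$ and $n-|\Pi_1|+2k+2-sw(u)$ for $\overline U$). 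For $u\in U_i$ with $k$ odd, $\mathbf s_n u\in U_{n-|\Pi_1|+2k-i}$ and $n-|\Pi_1|+2k-i\equiv n-i+|\Pi_1|-2\pmod 4$ because $k$ odd, $|\Pi_1|$ even give $2k\equiv 2|\Pi_1|-2\pmod 4$. This produces the pair $\{i,\ n-i+|\Pi_1|-2\}$ glued together; combined with the $\mathbf W_P$-identification $i\sim n-i$ from Lemma~\ref{l6.1}, we get $\{i,\ n-i,\ n-i+|\Pi_1|-2,\ i+|\Pi_1|-2\}$, which is exactly $B_i$ before taking the $+4$ quotient. The identical computation with the extra $+2$ gives the four residues defining $C_i$ for $\overline U$.

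Next, (b): I would show $U_i\sim U_{i+4}$ and $\overline U_{i}\sim\overline U_{i+4}$ under $\mathbf W$, which forces the $B_i$ and $C_i$ to be genuine mod-$4$ classes and caps $|\mathcal P|$. Here one repeats the two-step trick from Theorem~\ref{odd_1}: pick $k$ the least odd integer $\ge\max\{1,\ i+|\Pi_1|-n+2\}$, verify the analogue of (\ref{en5.2}) using $|\Pi_1|\le n-3$, then apply $\mathbf s_n$ again with $k+2$ using $4\le|\Pi_1|$ to verify the analogue of (\ref{en5.3}); the net displacement is $+4$. For $\overline U$ the feasibility inequalities on $k$ are the same (the extra $+2$ is additive and does not affect the range), so the same choices work. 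Having established that $U_{B_1},\dots,U_{B_4}$ and $\overline U_{C_1},\dots,\overline U_{C_4}$ are unions of $\mathbf W_P$-orbits closed under $\mathbf s_n$, hence are $\mathbf W$-orbits (nonempty: each hits some achievable simple weight in $[1,n-1]$ resp.\ $[1,n]$), the count $|\mathcal P|$ follows by inspecting how many of the eight sets are nonempty and distinct — when $n$ is odd several $B_i$ coincide and several $C_i$ coincide, collapsing to $4$; when $n$ is even all eight survive, giving $6$ (plus the trivial orbit $\{0\}$, already counted). Finally $M(S)$: an orbit has weight $1$ iff it contains a weight-one vector, and by the displayed description of $sw(\widetilde s_i)$ and $sw(\widetilde s_n)$ the weight-one vectors in $U$ have simple weights exactly $I$ and those in $\overline U$ exactly $J$; so $M(S)=1$ iff every $U_{B_i}$ meets $I$ and every $\overline U_{C_i}$ meets $J$, else $M(S)=2$ since $w(U_i),w(\overline U_j)\le 2$ always.

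**The main obstacle** I expect is bookkeeping the mod-$4$ arithmetic cleanly while simultaneously carrying the $\mathbf W_P$-identification $i\leftrightarrow n-i$ (which has no analogue in the odd case, where $\Delta=\Pi$ and $U_i\not\sim U_{n-i}$ a priori): one must be sure that the four residues listed in $B_i$ (and in $C_i$) are precisely the orbit of $i$ under the group generated by $x\mapsto x+4$, $x\mapsto n-x$, and $x\mapsto x+|\Pi_1|-2$ (resp.\ $x\mapsto x+|\Pi_1|$, with the shift by $2$), and that no further collapsing occurs when $4\le|\Pi_1|\le n-3$. The parity sub-cases of Lemma~\ref{ln6.1} ($\overline n\in\Pi_1$ versus not, and $|\Delta(u)\cap\Pi_1|$ even versus odd on each of $U$ and $\overline U$) also need to be dispatched, but these are routine once one observes that the ``else'' branch shifts $sw$ by an even amount $|\Pi_1|-2k$ that can be tuned to $0$ by a legal choice of $k$, so it never creates new identifications beyond those already forced.
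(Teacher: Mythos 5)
Your skeleton is the same as the paper's (Lemma~\ref{l6.1} for the $\mathbf{W}_P$-orbits, Lemma~\ref{ln6.1} for the effect of $\mathbf{s_n}$, then a two-step application of $\mathbf{s_n}$ to get the shift by $4$), but the two places you wave through are exactly where the work lies, and your reasons for waving them through do not hold. First, you may not ``treat $\overline{n}\in\Pi_1$ as the governing case'': whether $\overline{n}$ lies in $\Pi_0$ or $\Pi_1$ is forced by the graph (it is determined by the parity of $m$, the degree of $s_n$), so the case $\overline{n}\in\Pi_0$ must be proved, not assumed away. In that case the active branch of Lemma~\ref{ln6.1}(i) sends $U_i$ to $U_{i+|\Pi_1|-2k}$ (a translation, not the reflection $i\mapsto n-|\Pi_1|+2k-i$), the feasibility window for $k$ changes (now $i-k\leq |\Pi_0|-1$), and the net ``$+4$'' requires the second parameter $|\Pi_1|-k-2$ rather than $k+2$; the paper moreover has to split into $|\Pi_1|\in\{4,6\}$ and $|\Pi_1|\geq 8$ there. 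Your stated reason for dismissing this case --- that the shift $|\Pi_1|-2k$ ``can be tuned to $0$'' --- fails whenever $|\Pi_1|\equiv 0\pmod 4$, since $k$ must be odd while $|\Pi_1|/2$ is even; and in any event the issue in that case is not whether extra identifications appear, but whether the identifications defining $B_i$ (in particular $U_i\sim U_{i+4}$) are achieved at all, which is the whole content of the proof.

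Second, on $\overline{U}$ the parity of $k$ flips: for $u\in\overline{U}$ one has $\overline{n+1}=\widetilde{s}_n\in\Delta(u)$, so by Lemma~\ref{ln6.1}(ii) $\mathbf{s_n}$ moves $u$ precisely when $k=|\Delta(u)\cap\Pi_1|$ is \emph{even}; with $k$ odd, $\mathbf{s_n}$ fixes $u$ and accomplishes nothing. Hence your claim that ``the same choices work'' (least odd $k\geq\max\{1,i+|\Pi_1|-n+2\}$, then $k+2$) is wrong for $\overline{U}$, and your residue computation for $C_i$ only comes out right because $2k\equiv 0\pmod 4$ for even $k$; done with odd $k$ it would land off by $2$ from $C_i$. (The paper's sketch of this half indeed takes $k$ even and then replaces it by $k+2$.) Finally, the orbit count: for $n$ even the eight sets $U_{B_1},\dots,\overline{U}_{C_4}$ do not ``all survive'' as distinct orbits --- that would give $9$ orbits including the trivial one --- they collapse to five distinct nontrivial orbits, and $|\mathcal{P}|=6$ comes from carrying out that mod-$4$ coincidence check, which your sketch skips. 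So the approach is the right one, but as written the $\overline{n}\in\Pi_0$ case, the even-$k$ analysis on $\overline{U}$, and the coincidence count are genuine gaps that still have to be filled essentially as in the paper.
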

\begin{proof} Firstly we determine the orbits of $U$ under $\mathbf{W}$. By Lemma~\ref{l6.1}, $U_{i, n-i}$ is contained in an orbit under $\mathbf{W}$ for $1\leq i\leq
n-1.$ We suppose $\overline{n}\in \Pi_0$ and the case $\overline{n}\in \Pi_1$ is left to the
reader. In this case $U_i$ and $U_{i+|\Pi_1|-2k}$ are in the same orbit of $F_2^n$ under
$\mathbf{W}$ by Lemma~\ref{ln6.1}(i), where $1\leq i+|\Pi_1|-2k\leq n-1$ and $k$ runs through
possible odd integers $|\Pi_1\cap \Delta(u)|$ for $u\in U_i.$ In fact $k$ is any odd number that
satisfies $k\leq |\Pi_1|-1$ and $0\leq i-k\leq |\Pi_0|-1;$ equivalently
\begin{equation}\label{en6.2}
{\rm max}\{1, i+|\Pi_1|-n+1\}\leq k\leq {\rm min}\{|\Pi_1|-1, i\}.
\end{equation}
Such an odd $k$ exists for any $1\leq i\leq n-3,$ and note that $$ i+|\Pi_1|-2k\equiv
i+|\Pi_1|-2\pmod 4.$$  To determine the orbits of $U$ under $\mathbf{W}$ in this case, it remains
to show that $U_i$ and $U_{i+4}$ are in the same orbit under $\mathbf{W}$ for $1\leq i\leq \lfloor
n/2 \rfloor.$ Suppose $4\leq |\Pi_1|\leq 6.$ Set $k=1$ to conclude $U_i$ and $U_{i+2}$ in an orbit
if $|\Pi_1|=4$; $U_i$ and $U_{i+4}$ in an orbit if $|\Pi_1|=6.$ Suppose $|\Pi_1|\geq 8.$ Then
$n\geq 11$ and $\lfloor n/2\rfloor\leq n-6.$ Set $k$ to be the least odd integer greater than or
equal to ${\rm max}\{1, i+|\Pi_1|-n+3\}.$ For this $k$, (\ref{en6.2}) holds and then $U_i$ and
$U_{i+|\Pi_1|-2k}$ are in the same orbit. Here we use the assumption $|\Pi_1|\leq n-3.$ Note that
if we use $(i+|\Pi_1|-2k, |\Pi_1|-k-2)$ to replace $(i, k)$ in (\ref{en6.2}), we have
\begin{equation}\label{en6.3}{\rm max}\{1, i+2|\Pi_1|-2k-n+1\}\leq |\Pi_1|-k-2\leq {\rm
min}\{|\Pi_1|-1, i+|\Pi_1|-2k\}.
\end{equation}
The above $k$, the assumption $4\leq |\Pi_1|$ and $i\leq n-6$ guarantee the equation (\ref{en6.3}).
Since $(i+|\Pi_1|-2k)+|\Pi_1|-2(|\Pi_1|-k-2)=i+4,$ we have
 $U_{i+|\Pi_1|-2k}$ and $U_{i+4}$ in the same orbit. Putting these together, $U_i$ and $U_{i+4}$
are in the same orbit. Then the orbits of $U$ under $\mathbf{W}$ are $U_{B_1}$, $U_{B_2}$,
$U_{B_3},$ $U_{B_4}$ as in the statement.
\medskip

Secondly, we determine the orbits of $\overline{U}$ under $\mathbf{W}.$ Since the proof is similar
to the above case, we only give a sketch. By Lemma~\ref{l6.1}, $\overline{U}_{i, n+2-i}$ is
contained in an orbit for $2\leq i\leq n.$ We suppose $\overline{n}\in \Pi_1$ and leave the case
$\overline{n}\in \Pi_0$ to the reader. By Lemma~\ref{ln6.1}(ii), we have $U_i$ and
$U_{n-|\Pi_1|+2k+2-i}$ in an orbit, where $k=|\Delta(u)\cap\Pi_1|$ is an even number for some $u\in
U_i$ and $1\leq i\leq n-4.$ From the same argument with $k$ been replaced by $k+2$, we find
$U_{n-|\Pi_1|+2k+2-i}$ and $U_{i+4}$ in an orbit to finish the proof.
\medskip

 The remaining statements of the theorem are obtained from the orbits description.
\end{proof}

The following theorem determine the nontrivial orbits of $F_2^n$ under $\mathbf{W}$ in the
remaining cases.

\begin{thm}\label{p6.4}
Suppose $|\Pi_1|=2,$ $n-2$ or $n-1.$ Then with referring to the notation in Theorem~\ref{even_1},
the nontrivial orbits of $F_2^n$ under $\mathbf{W}$ are
$$
\left\{
\begin{array}{ll}
U_{i,n-i},\overline{U}_{C_1}, \overline{U}_{C_2},      & \hbox{if $|\Pi_1|=2;$}\\
U_{odd},U_{2j,n-2j},\overline{U}_{odd},\overline{U}_{2t,n+2-2t}, & \hbox{if $|\Pi_1|=n-2;$}\\
U_{2j-1,2j,n-2j,n+1-2j},\overline{U}_{2t-1,2t,n+2-2t,n+3-2t},  & \hbox {if $|\Pi_1|=n-1,$}
\end{array}
\right.
$$
for $1\leq i\leq \lfloor n/2\rfloor,$ $1\leq j\leq \lceil (n-2)/4\rceil$ and $1\leq t\leq \lceil
n/4\rceil.$ In particular the number of orbits (including the trivial one) of $F_2^n$ under $W$ is
$$|\mathcal{P}|=
\left\{
      \begin{array}{ll}
        (n+6)/2, & \hbox{if $|\Pi_1|=2$ and $n$ is even, or $|\Pi_1|=n-2;$} \\
        (n+3)/2, & \hbox{if $|\Pi_1|=2$ and $n$ is odd, or $|\Pi_1|=n-1,$}
      \end{array}
    \right.$$
and the maximum-orbit-weight $M(S)$ of $S$ is at most $2$. Moreover $M(S)=1$ if and only if
$$\left.
    \begin{array}{ll}
      ~~~~\{i, n-i\}\cap I\not=\emptyset~{\rm and}~\overline{U}_{C_j}\cap J\not=\emptyset~{\rm for~}1\leq j\leq 2, & \hbox{if $|\Pi_1|=2$;} \\    \\
\left\{
  \begin{array}{ll}
  odd\cap I\not=\emptyset, \{2j, n-2j\}\cap I\not=\emptyset \\
{\rm for~all~} 1\leq j\leq \lceil (n-2)/4\rceil,       \\
 odd\cap J\not=\emptyset, \{2t, n+2-2t\}\cap J\not=\emptyset \\
{\rm for~all~} 1\leq t\leq \lceil n/4\rceil,
  \end{array}
\right.
& \hbox{if $|\Pi_1|=n-2$;}      \\       \\
\left\{
  \begin{array}{ll}
\{2j-1, 2j, n-2j, n+1-2j\}\cap I\not=\emptyset\\
 {\rm for~ all~} 1\leq j\leq \lceil
(n-2)/4\rceil,\\
\{2t-1, 2t, n+2-2t, n+3-2t\}\cap J\not=\emptyset\quad\\
 {\rm for~ all~} 1\leq t\leq \lceil n/4\rceil,
  \end{array}
\right.    &\hbox{ if $|\Pi_1|=n-1.$}
    \end{array}
  \right.
 $$
\end{thm}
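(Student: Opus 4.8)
The plan is to follow the template already established in the proof of Theorem~\ref{even_1}, treating the three special values $|\Pi_1|=2,$ $n-2,$ $n-1$ as boundary cases where the ``spreading argument'' (showing $U_i$ and $U_{i+4}$, resp.\ $\overline{U}_i$ and $\overline{U}_{i+4}$, lie in one orbit) either fails or degenerates. First I would recall from Lemma~\ref{l6.1} that the nontrivial orbits of $F_2^n$ under $\mathbf{W}_P$ are $\overline{U}_1,$ $\overline{U}_{i+1,n+1-i}$ and $U_{i,n-i}$, so the only task is to see which of these $\mathbf{W}_P$-orbits get merged by the single extra generator $\mathbf{s_n}$, using Lemma~\ref{ln6.1}. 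For each of the three values of $|\Pi_1|$ I would write down the inequality~(\ref{en6.2}) governing the admissible odd (or even) $k$ and simply enumerate: for $|\Pi_1|=2$ the constraint forces $k=1$ (when $\overline{n}\in\Pi_1$, $k=0$ in the even case), so $\mathbf{s_n}$ moves $U_i$ to $U_{i\pm(|\Pi_1|-2k)}=U_i$ within $U$ but shifts simple weights on $\overline{U}$ by $|\Pi_1|=2$, collapsing the $\overline{U}$-orbits into the two residue classes $\overline{U}_{C_1},\overline{U}_{C_2}$ while leaving $U_{i,n-i}$ untouched; for $|\Pi_1|=n-2$ and $|\Pi_1|=n-1$ the admissible $k$ is pinned to $i$ or $i-1$ according to the parity of $i$ (exactly as in the proof of Theorem~\ref{p5.4}), which only identifies $U_i$ with $U_{i}$ or with its partner already present, giving the banded orbits $U_{odd},U_{2j,n-2j}$ etc.

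The second step is the counting of $|\mathcal{P}|$. Once the orbit list is in hand this is bookkeeping: add $1$ for the trivial orbit $\{0\}$, then count the blocks. For $|\Pi_1|=2$ with $n$ even one gets $\lfloor n/2\rfloor$ orbits $U_{i,n-i}$ (with $U_{n/2}$ a singleton class), plus $2$ from $\overline{U}$, plus the trivial one, matching $(n+6)/2$; for $n$ odd the middle index behaves differently and one lands on $(n+3)/2$. The cases $|\Pi_1|=n-2$ and $|\Pi_1|=n-1$ are counted the same way against the stated formulas; the parity of $n$ interacts with the ceiling functions $\lceil(n-2)/4\rceil,\lceil n/4\rceil$ and I would check the small residues $n\bmod 4$ by hand to confirm no off-by-one error.

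The third step is the characterization of $M(S)$. Since by Lemma~\ref{l3.2} every $U_i$ and $\overline{U}_j$ has weight at most $2$, every orbit has weight at most $2$, giving $M(S)\le 2$ immediately. For the refinement, an orbit $O$ has $w(O)=1$ iff it contains some $U_i$ with $w(U_i)=1$ or some $\overline{U}_j$ with $w(\overline{U}_j)=1$, which by the displayed equivalences is exactly $i\in I$ (resp.\ $j\in J$); so $M(S)=1$ iff each nontrivial orbit meets $I$ or $J$ appropriately, which unwinds into the three displayed systems of conditions. I would phrase this by noting that the $U$-orbit indexed by a residue block $\{i,n-i\}$ (or $U_{odd}$, $U_{2j,n-2j}$, $U_{2j-1,2j,n-2j,n+1-2j}$) has weight $1$ iff that block meets $I$, and symmetrically for $\overline{U}$-blocks and $J$.

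The main obstacle I expect is purely organizational rather than conceptual: Lemma~\ref{ln6.1} splits into subcases according to whether $\overline{n}\in\Pi_0$ or $\overline{n}\in\Pi_1$ and according to the parity of $|\Delta(u)\cap\Pi_1|$, so for each of the three values of $|\Pi_1|$ one must verify that the two possibilities for $\overline{n}$ lead to the same orbit partition (the proof of Theorem~\ref{even_1} sidesteps this by ``leaving it to the reader,'' and I would do the same, verifying one branch in detail and noting the other is symmetric). A secondary nuisance is that when $|\Pi_1|=n-2$ or $n-1$ the $U$-side and $\overline{U}$-side use indices of opposite parity, so one must be careful that the ceiling/floor bounds on $j$ and $t$ in the orbit list and in the $M(S)=1$ conditions are consistent; I would reconcile these by checking the extreme indices (smallest and largest $j,t$) explicitly. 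Modulo these case-splits, the argument is a direct transcription of the odd-$|\Pi_1|$ boundary analysis in Theorem~\ref{p5.4} combined with the even-$|\Pi_1|$ generic analysis in Theorem~\ref{even_1}.
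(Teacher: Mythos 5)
Your proposal matches the paper's own (very terse) proof: the paper likewise reduces everything to Lemma~\ref{l6.1} together with Lemma~\ref{ln6.1}, checks which values of $k$ are admissible in (\ref{en6.2}) for each of $|\Pi_1|\in\{2,\,n-2,\,n-1\}$, treats $\overline{U}$ as in the second part of Theorem~\ref{even_1}, and reads off $|\mathcal{P}|$ and $M(S)$ from the resulting orbit description exactly as you outline. One narration slip worth noting: for $|\Pi_1|=n-2$ and $n-1$ the admissible $k$ does \emph{not} merely identify $U_i$ with itself or its $\mathbf{W}_P$-partner $U_{n-i}$ --- for odd simple weight $i$ it links $U_i$ to $U_{i\pm 2}$ (directly or via the partner), which is precisely what creates $U_{odd}$ and the four-element bands --- but since you state the correct resulting orbits, this does not change the validity of your plan.
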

\begin{proof}
The proof is similar to the proof of Theorem~\ref{p5.4} that follows from the proof of
Theorem~\ref{odd_1}. At this time, to determine the orbits of $U$ we check what values of odd $k$
occur in (\ref{en6.2}) in each case of $|\Pi_1|\in\{2, n-2, n-1\}$. To determine the orbits of
$\overline{U}$ under $\mathbf{W}$, we do similarly as in the second part of the proof of
Theorem~\ref{even_1}.
\end{proof}

\begin{exam}
Let $S$ be an even cycle of length $n$, i.e. $n$ is even,  $m=2$, $j_1=1$ and $j_2=n-1$. Then
$\Pi_0=\{\overline{1}, \overline{n}\}$ and $\Pi_1=\{\overline{2}, \overline{3},
\ldots,\overline{n-1}\}.$ Note that $|\Pi_1|=n-2$ is even and $I=J=\{1,3,\ldots,n-1\}.$ Hence
Theorem~\ref{p6.4} applies. We have
$$\mathcal{P}=\{U_{odd}, U_0, U_{2, n-2},  U_{4, n-4},\ldots
,U_{2j, n-2j},\overline{U}_{odd}, \overline{U}_{2, n}, \overline{U}_{4, n-2}, \ldots,
\overline{U}_{2t, n-2t+2}\},$$ where $j=\lceil(n-2)/4\rceil$ and $t=\lceil n/4\rceil.$
In particular
$$|\mathcal{P}|=\lceil (n-2)/4\rceil+\lceil n/4\rceil+3=(n+6)/2,$$
 and $M(S)=2.$
\end{exam}

\section{Summary}\label{s7}

We list the main results as follows. Let $S$ be a connected graph with $n$ vertices $s_1, s_2,
\ldots, s_n$ that contains an induced path $s_1, s_2, \ldots, s_{n-1}$ of $n-1$ vertices, and $s_n$
has neighbors $s_{j_1}, s_{j_2}, \ldots, s_{j_m}$ with $1\leq j_1<j_2\cdots<j_m\leq n-1.$ Let
$\widetilde{s}_1, \widetilde{s}_2, \ldots, \widetilde{s}_n$ denote the characteristic vectors of
$F_2^n$ and let $ \mathbf{s_1}, \mathbf{s_2}, \ldots, \mathbf{s_n}$ denote the flipping moves
associated with $s_1, s_2, \ldots, s_n$ respectively.
\bigskip

 Set
$$
\overline{1}=\widetilde{s}_1,~\overline{i+1}=\mathbf{s_i}\mathbf{s_{i-1}}\cdots
\mathbf{s_1}\overline{1}\quad (1\leq i\leq n-1), \quad \overline{n+1}:=\widetilde{s}_n.
$$ and consider the following three sets
\begin{eqnarray*}
\Pi_{~}&=&\{\overline{1}, \overline{2}, \ldots, \overline{n}\},\\
\Pi_0&=&\{\overline{i}\in\Pi~|~<\overline{i}, \widetilde{s}_n>=0\},\\
\Pi_1&=&\Pi-\Pi_0.
\end{eqnarray*}
By using the graph structure we can compute the following value
$$
|\Pi_1|=
      \sum^{\lceil\frac{m}{2}\rceil}_{k=1}j_{2k}-j_{2k-1}.
      $$
as shown in Proposition~\ref{p3.2}.  Let
$$\Delta:=\left\{
            \begin{array}{ll}
              \Pi, & \hbox{if $|\Pi_1|$ is odd;} \\
              \Pi\cup\{\overline{n+1}\}-\{\overline{n}\}, & \hbox{if $|\Pi_1|$ is even}
            \end{array}
          \right.
$$
be the simple basis of $F_2^n$ as shown in the beginning of Section~\ref{s4}. For a vector $u\in
F_2^n$ let $sw(u)$ denote the simple weight of $u$, i.e. the number nonzero terms in writing $u$ as
a linear combination of elements in $\Delta.$ Let $U$ be the subspace spanned by the vectors in
$\Pi.$ For $V\subseteq F_2^n$ and $T\subseteq \{0, 1, \ldots, n\},$
$$V_T:=\{u\in V~|~sw(u)\in T\},$$ and for shortness $V_{t_1, t_2, \ldots, t_i}:=V_{\{t_1, t_2, \ldots,
t_i\}}.$ Let $odd$ be the subset of $\{1, 2, \ldots, n\}$ consisting of odd integers. Set
\begin{eqnarray*}
A_i&=&\{j\in[n]~|~j\equiv i,n+|\Pi_1|-i\pmod{4}\},\\
B_i&=&\{j\in[n-1]~|~j\equiv i, i+|\Pi_1|-2, n-i, n-i+|\Pi_1|-2\pmod{4}\},\\
C_i&=&\{j\in[n]~|~j\equiv i, i+|\Pi_1|, n+2-i, n+2-i+|\Pi_1| \pmod{4}\}.
\end{eqnarray*}
Let $\mathcal{P}$ denote the set of orbits of the flipping puzzle on $S.$ Then the set
$\mathcal{P}$ and its cardinality $|\mathcal{P}|$ are given in the following table according to the
different cases of the pair $(|\Pi_1|, n)$ in the first two columns.

\bigskip

\begin{center}
\begin{tabular}{cccc}
\hline
$|\Pi_1|$&$n$&
{\small $\left.
\begin{array}{c}
{\rm nontrivial~}O\in \mathcal{P}\\
\hbox{(might be repeated)}
\end{array}
\right.$} & $|\mathcal{P}|$
 \\
\hline \\[-3pt]
{\small $\left.
\begin{array}{c}
3\leq|\Pi_1|\leq n-3,\\
|\Pi_1| \hbox{~is odd}
\end{array}
\right.$}
  &even  &$U_{A_j}$ & $3$ \\
 \hline \\[-5pt]
{\small $\left.
\begin{array}{c}
3\leq|\Pi_1|\leq n-3,\\
|\Pi_1| \hbox{~is odd}
\end{array}
\right.$}
  &odd  &$U_{A_j}$ & $4$   \\
\hline \\[-5pt]
{\small $\left.
\begin{array}{c}
4\leq|\Pi_1|\leq n-3,
\\
|\Pi_1| \hbox{~is even}
\end{array}
\right.$}
  &even&$U_{B_j}, \overline{U}_{C_j}$ & $6$
            \\
\hline \\[-5pt]
{\small $\left.
\begin{array}{c}
4\leq|\Pi_1|\leq n-3,\\
|\Pi_1| \hbox{~is even}
\end{array}
\right.$}&odd&$U_{B_j}, \overline{U}_{C_j}$&$4$
     \\
\hline \\[-5pt]
$|\Pi_1|=1$
  &   &$U_{t, n+1-t}$&  $\lceil (n+2)/2\rceil$  \\
\hline \\[-5pt]
$|\Pi_1|=2$
  &even&$U_{i, n-i}, \overline{U}_{C_1}, \overline{U}_{C_2} $ & $\displaystyle (n+6)/2$
 \\
\hline \\[-5pt]
$|\Pi_1|=2$&odd&$U_{i, n-i},\overline{U}_{C_1}, \overline{U}_{C_2} $&$\displaystyle (n+3)/2$                                                                     \\
\hline \\[-5pt]
{\small $\left.
\begin{array}{c}
|\Pi_1|=n-2,\\
|\Pi_1| \hbox{~is odd}
\end{array}
\right.$}
  &odd  & $U_{odd}, U_{2i}$ & $\displaystyle (n+3)/2$
\\
 \hline \\[-5pt]
    {\small $\left.
\begin{array}{c}
|\Pi_1|=n-2,\\
|\Pi_1| \hbox{~is even}
\end{array}
\right.$}
  &even  & $\left.
          \begin{array}{ll}
           U_{odd}, U_{2h, n-2h}, \\
             \overline{U}_{odd}, \overline{U}_{2g, n+2-2g}
          \end{array}
        \right.
$ &$\displaystyle (n+6)/2$
\\
 \hline \\[-5pt]
{\small $\left.
\begin{array}{c}
|\Pi_1|=n-1,\\
|\Pi_1| \hbox{~is odd}
\end{array}
\right.$}
  &even  & $U_{2t-1, 2t}$ & $\displaystyle (n+2)/2$
\\
 \hline \\[-5pt]
    {\small $\left.
\begin{array}{c}
|\Pi_1|=n-1,\\
|\Pi_1| \hbox{~is even}
\end{array}
\right.$}
  & odd & $\left.
          \begin{array}{ll}
            U_{2h-1, 2h, n-2h, , n+1-2h}, \\
            \overline{U}_{2g-1, 2gn+2-2g, n+3-2g}
          \end{array}
        \right.
 $ & $\displaystyle (n+3)/2$
\\
 \hline \\[-5pt]
\end{tabular}

\bigskip

\noindent where $1\leq j\leq 4,$ $1\leq t\leq \lceil n/2\rceil,$ $1\leq i\leq \lfloor n/2\rfloor,$
$1\leq h\leq \lfloor (n-2)/4\rfloor,$ $1\leq g\leq \lceil n/4 \rceil.$

\bigskip
\bigskip

{\bf Tabel. 1}
\end{center}





\section{Appendix}\label{s8}
We are indebted to a referee for the information in this section. Let $S$ be a simple connected
graph with $n$ vertices and adjacency matrix $A$. The adjacency matrix defines an alternating form
 $<, >_A$ on $F_2^n$ by
$$<u, v>_A=u^tAv$$
and a quadratic form $q$ on $F_2^n$ that satisfies $q(\widetilde{s})=1$ and
$$q(u+v)=q(u)+q(v)+<u, v>_A$$ for all vertices $s\in S$ and $u, v\in F_2^n$. For a vertex $s\in S$, the
associating matrix $\mathbf{s}$ in Definition~\ref{d2.1} satisfies
\begin{equation}\label{e8.1} \mathbf{s}A\mathbf{s}^t=A.
\end{equation}
 Hence  $\mathbf{s}^t$ is an element of the symplectic group $S(n, F_2)$ \cite[p. 69]{t:92}, and therefore
the transpose group $\mathbf{W}^t$ of the flipping group $\mathbf{W}$ of $S$ is a subgroup of $S(n,
F_2).$ Moreover $\mathbf{W}^t$ preserves $q$ in the sense that $q(\mathbf{w}^tu)=q(u)$ for any
$\mathbf{w}^t\in \mathbf{W}^t$ and any $u\in F_2^n.$
 Note that from
 Definition~\ref{d2.1},
 \begin{equation}\mathbf{s}^tu=u+<\widetilde{s}, u>_A\widetilde{s}\end{equation}
for $s\in S$ and $u\in F_2^n.$  Such an  $\mathbf{s}^t$ is called a {\it transvection} in the
literature.  The study of arbitrary groups generated by transvections was largely instituted by
McLaughlin \cite{m:67, m:69}. Hamelink's work on Lie algebras led to a question about groups
generated by symplectic transvections over $F_2$ \cite{h:69}. Hamelink's question was answered by
Seidel, as reported and generalized by Shult in his Breukelen lectures \cite{s:73, s:74}. Graphical
notation is implicit in this earlier work and explicit in that of Brown and Humphries \cite{rh:85,
h:85}. A survey of related work, a brief discussion of Humphries results, and a discussion of the
isomorphism types of groups occurring are given by Hall \cite{h:84}. More recent results are in
\cite{r:05, s:05}.
\bigskip

Let $\mathcal{P}'$ denote the set of orbits under the action of $\mathbf{W}^t$ on $F_2^n$. Several
of the papers discussed above (or referenced therein) also focus on and discuss orbit lengths for
$\mathcal{P}'.$  As before let $\mathcal{P}$ be the set of orbits under the action of $\mathbf{W}$
on $F_2^n$ (the set of orbits of the flipping puzzle on $S$). By (\ref{e8.1}) and using
$\mathbf{s}^2=I$, the map
$$O\rightarrow AO$$
is a map from $\mathcal{P}'$ into $\mathcal{P},$ where $AO=\{Au~|~u\in O\}.$ In particular if $A$
is nonsingular over $F_2$, this map is a bijection. But when $A$ is singular, the orbits structures
can presumably differ. See more \cite{h:pre4} for more connections between $\mathcal{P}'$ and
$\mathcal{P}.$

\bigskip

\noindent Hau-wen Huang \hfil\break Department of Applied Mathematics \hfil\break National Chiao
Tung University \hfil\break 1001 Ta Hsueh Road \hfil\break Hsinchu, Taiwan 30050, R.O.C.\hfil\break
Email: {\tt poker80.am94g@nctu.edu.tw} \hfil\break Fax: +886-3-5724679 \hfil\break
\medskip

\noindent Chih-wen Weng \hfil\break Department of Applied Mathematics \hfil\break National Chiao
Tung University \hfil\break 1001 Ta Hsueh Road \hfil\break Hsinchu, Taiwan 30050, R.O.C.\hfil\break
Email: {\tt weng@math.nctu.edu.tw} \hfil\break Fax: +886-3-5724679 \hfil\break
\medskip

\end{document}